\newtheorem{thm}{Theorem}
\newtheorem{lem}[thm]{Lemma}
\theoremstyle{definition}
\newcommand{\ord}{\operatorname{ord}}
\providecommand{\abs}[1]{\left\lvert#1\right\rvert}
\begin{document}


\baselineskip=17pt



\title[On prime factors of terms of binary recurrence sequences]{On prime factors of terms of binary recurrence sequences}

\author[C.L.~Stewart]{C.L.~Stewart}
\address{Department of Pure Mathematics, University of Waterloo \\
Waterloo, Ontario, Canada N2L 3G1}
\email{cstewart@uwaterloo.ca}

\dedicatory{In memory of Professor Andrzej Schinzel}

\date{}


\subjclass[2020]{Primary 11B37; Secondary 11J86}

\keywords{binary recurrence sequences, linear forms in logarithms}

\maketitle

\section{Introduction}
Let $r$ and $s$ be integers with $r^2+4s\neq 0$. Let $u_0$ and $u_1$ be integers and put
\begin{equation} \label{eq1}
u_n=ru_{n-1}+su_{n-2},
\end{equation}
for $n=2,3,\dots$ . Then for $n\geq0$
\begin{equation} \label{eq2}
u_n=a\alpha^n+b\beta^n,
\end{equation}
where $\alpha$ and $\beta$ are the roots of the characteristic polynomial $x^2-rx-s$ and
\begin{equation} \label{eq5}
a=\frac{u_1-u_0\beta}{\alpha-\beta},\qquad b=\frac{u_0\alpha-u_1}{\alpha-\beta}
\end{equation}
when $\alpha\neq\beta.$ The sequence of integers $(u_n)^\infty_{n=0}$ is a binary recurrence sequence. It is said to be non-degenerate if $ab\alpha\beta\neq0$ and $\alpha/\beta$ is not a root of unity.

In 1934 Mahler \cite{18} proved that if $u_n$ is the $n$-th term of a non-degenerate binary recurrence sequence then the greatest prime factor of $u_n$ tends to infinity with $n$. His proof was ineffective however since it depended on a p-adic version of the Thue-Siegel theorem. In 1967 Schinzel \cite{27} refined work of Gelfond on estimates for linear forms in the logarithms of two algebraic numbers and as a consequence he was able to give an effective lower bound. For any integer $m$ let $P(m)$ denote the greatest prime factor of $m$ with the convention that $P(0)=P(\pm 1)=1.$ Schinzel proved that there exists a positive number $C_0$ which is effectively computable in terms of $a,$ $b,$ $\alpha$ and $\beta$ such that
$$
P(u_n)>C_0n^{c_1}(\log n)^{c_2},
$$
where
$$
(c_1,c_2)=\begin{cases}
(1/84, 7/12) & \text{if}\ \alpha\ \text{and}\ \beta\ \text{are integers} \\
(1/133,7/19) & \text{otherwise}.
\end{cases}
$$
The above result was subsequently improved by Stewart \cite{39}, by Yu and Hung \cite{45} and in 2013 by Stewart \cite{St} who showed that there is a positive number $C$, which is effectively computable in terms of $a,b,\alpha$ and $\beta$ such that if $n$ exceeds $C$ then
\begin{equation} \label{eq7}
P(u_n)>n^{1/2}\exp(\log n/104 \log\log n).
\end{equation}

Let $(t_n)^\infty_{n=0}$ be a non-degenerate binary recurrence sequence with $t_0=0$ and $t_1=1.$ Then, recall \eqref{eq2} and \eqref{eq5},
\begin{equation} \label{eq10}
t_n=\frac{\alpha^n-\beta^n}{\alpha-\beta}
\end{equation}
for $n=0,1,2,\dots\ $ and the sequence is known as a Lucas sequence. (Note that a Lucas sequence is non-degenerate.) Lucas sequences have a rich divisibility structure and have been extensively studied, eg. \cite{6}, \cite{8}, \cite{9}, \cite{14}, \cite{17}, \cite{37} and \cite{46}. In 2013 Stewart \cite{41} proved that if $t_n$ is the $n$-th term of a Lucas sequence then
\begin{equation} \label{eq9}
P(t_n)>n\exp(\log n/104\log\log n)
\end{equation}
provided that $n$ exceeds a number which is effectively computable in terms of $\alpha$ and $\beta$, see also \cite{7a} and \cite{HH}.

In 1967 Schinzel \cite{27} introduced a class of binary recurrence sequences which includes the Lucas sequences and whose members have similar divisibility properties to the Lucas sequences. He considered those sequences for which $a/b$ and $\alpha/\beta$ are multiplicatively dependent and proved that if $\alpha$ and $\beta$ are real numbers then there is a positive number $c$, which is effectively computable in terms of $a,b,\alpha$ and $\beta$, such that
\begin{equation} \label{eq11}
P(u_n)>n-c.
\end{equation}
Schinzel's proof of \eqref{eq11} depended on a result \cite{26} of his on primitive divisors of Lucas numbers. In 2003 Luca \cite{16} proved \eqref{eq11} in the case when $\alpha$ and $\beta$ are not real numbers. Observe that if $(u_n)^\infty_{n=0}$ is a non-degenerate binary recurrence sequence with a term which is zero then $a/b$ and $\alpha/\beta$ are multiplicatively dependent.

 We shall prove the following result.

\begin{thm} \label{Theorem 1}
Let $(u_n)^\infty_{n=0}$ be a non-degenerate binary recurrence sequence, as in  \eqref{eq2}, with $a/b$ and $\alpha/\beta$ multiplicatively dependent. There exists a positive number $C$, which is effectively computable in terms of $a,b,\alpha$ and $\beta$, such that if $n$ exceeds $C$ then
\begin{equation} \label{eq12}
P(u_n)>n\exp(\log n/104\log\log n).
\end{equation}
 
\end{thm}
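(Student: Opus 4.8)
The plan is to exploit the multiplicative dependence to reduce the estimation of $P(u_n)$ to that of the largest prime divisor of a term, with index a linear function of $n$, of an associated Lucas or Lehmer sequence, and then to invoke \eqref{eq9} together with its analogue for Lehmer numbers (which comes from the same work of Stewart).

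First I would record the algebraic set‑up. Non‑degeneracy gives $ab\alpha\beta\neq0$ and, with $r^2+4s\neq0$, $\alpha\neq\beta$; when $\alpha\notin\mathbb{Q}$ the numbers $a$ and $b$ are conjugate over $\mathbb{Q}$ in the quadratic field $\mathbb{Q}(\alpha)$. Multiplicative dependence of $a/b$ and $\theta:=\alpha/\beta$ yields integers $p\geq1$ and $m$ with $(b/a)^{p}=\theta^{m}$, the exponent of $b/a$ being nonzero since otherwise $\theta$ would be a root of unity; I would take $p$ minimal. For a prime ideal $\mathfrak q$ of $\mathbb{Q}(\alpha)$ prime to $2ab\alpha\beta$ one has the elementary criterion
\[
\mathfrak q\mid u_n\iff \theta^{\,n}\equiv -\,b/a\pmod{\mathfrak q},
\]
since $u_n=a\alpha^{n}\bigl(1+(b/a)\theta^{-n}\bigr)$, while $\mathfrak q\mid\alpha^{N}-\beta^{N}$ (resp. $\mathfrak q\mid\alpha^{N}+\beta^{N}$) iff $\theta^{N}\equiv1$ (resp. $\theta^{N}\equiv-1$). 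The whole argument consists in turning the first congruence into one of the latter two at the cost of a bounded factor.

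The principal case is $-b/a=\varepsilon\,\theta^{m'}$ with $\varepsilon\in\{1,-1\}$ and $m'\in\mathbb{Z}$ (this holds whenever $p\mid m$ and the relevant root of unity is $\pm1$, and it covers the case in which $u_n$ has a zero term). A direct computation then shows that $u_n$ equals a fixed nonzero rational number times $t_{\,n+m'}$ (when $\varepsilon=1$) or times $\alpha^{\,n+m'}+\beta^{\,n+m'}$ (when $\varepsilon=-1$). Since that rational number is fixed, for $n$ large every prime divisor of $t_{\,n+m'}$ (resp. of $\alpha^{\,n+m'}+\beta^{\,n+m'}$) exceeding the finitely many primes occurring in it, in $ab\alpha\beta$ and in $\alpha-\beta$ already divides $u_n$; hence $P(u_n)\geq P(t_{\,n+m'})$, respectively $P(u_n)\geq P(\alpha^{\,n+m'}+\beta^{\,n+m'})$. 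Now \eqref{eq9} applies to $t_{\,n+m'}$, and the companion statement — proved in the same paper alongside the Lucas and Lehmer cases, or deduced from it via $\alpha^{N}+\beta^{N}=t_{2N}/t_{N}$ and the fact that a large primitive prime divisor of $t_{2N}$ cannot divide $t_{N}$ — applies to $\alpha^{\,n+m'}+\beta^{\,n+m'}$; using $n+m'\geq n-O(1)$ and the monotonicity of $x\mapsto x\exp(\log x/104\log\log x)$, this yields \eqref{eq12}.

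It remains to treat the cases in which $-b/a$ is not $\pm$ a power of $\theta$: either $-b/a=\xi\,\theta^{m'}$ for a root of unity $\xi\in\mathbb{Q}(\alpha)$ of order $3$, $4$ or $6$ (which forces $\mathbb{Q}(\alpha)$ to be $\mathbb{Q}(\sqrt{-3})$ or $\mathbb{Q}(\sqrt{-1})$), or the minimal relation $(b/a)^{p}=\theta^{m}$ has $p>1$ and $p\nmid m$, which happens exactly when $\theta$ becomes a perfect $p$‑th power in $\mathbb{Q}(\alpha)^{*}$ after multiplication by a root of unity; one checks that both possibilities genuinely occur. In each of them I would split $n$ into residue classes modulo a fixed modulus, introduce a Lehmer pair of the shape $(\zeta\alpha,\bar\zeta\beta)$ (with $\zeta$ a root of unity chosen so that $(\zeta\alpha+\bar\zeta\beta)^{2}$ and $\zeta\bar\zeta\,\alpha\beta=\alpha\beta$ are rational integers and $\zeta^{2}\theta$ is not a root of unity), and show that, within each progression, the prime divisors of $u_n$ account for the primitive prime divisors of the corresponding Lehmer number of index linear in $n$; one then applies the Lehmer analogue of \eqref{eq9}. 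The main obstacle is precisely this last step: the root‑of‑unity twist makes $-b/a$ a fixed power of $\theta$ only on an arithmetic progression of $n$, the associated Lehmer pair need not be reduced in the sense of Bilu–Hanrot–Voutier, and one must verify with care that transferring large primitive divisors from the Lehmer number to $u_n$ costs only a bounded factor — possibly by adapting, rather than merely quoting, the linear‑forms‑in‑logarithms estimate underlying \eqref{eq9}. Everything else is routine algebra or citation.
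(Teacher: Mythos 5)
Your first case (when $-b/a$ is exactly $\pm$ a power of $\theta=\alpha/\beta$) is sound and corresponds to the easy half of the paper's argument, but the two cases you defer are exactly where the content of the theorem lies, and your sketch for them would not go through as written. First, when the minimal relation $(b/a)^{p}=\theta^{m}$ has $p>1$ with $p\nmid m$, twisting the pair $(\alpha,\beta)$ by roots of unity cannot help: a pair $(\zeta\alpha,\bar\zeta\beta)$ has ratio $\zeta^2\theta$, which is $\theta$ times a root of unity, so $-b/a$ is still not a power of it; no choice of $\zeta$ extracts the needed $p$-th root. The paper's device (going back to Schinzel and Luca) is different: with $k_1=k/(k,l)$, $l_1=l/(k,l)$ and B\'ezout integers $xl_1+yk_1=1$, one forms $\rho=a^{x}\alpha^{y}/(b^{x}\beta^{y})$, so that both $a/b$ and $\alpha/\beta$ are powers of $\rho$ up to roots of unity, and then $u_n$ itself is, up to a unit and a known algebraic-integer factor, equal to $\theta_1^{k_1n+l_1}-\zeta'\theta_2^{k_1n+l_1}$ for a new pair $(\theta_1,\theta_2)=(a^{x}\alpha^{y},b^{x}\beta^{y})$ (or its analogue for $x<0$) with $\theta_1+\theta_2$, $\theta_1\theta_2$ nonzero integers. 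This identity is what lets one transfer large prime divisors of the binary form to $u_n$ in the correct direction; your observation that $1+(b/a)\theta^{-n}$ divides something like $\alpha^{N}\mp\beta^{N}$ only moves primes the wrong way.

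Second, and more seriously, the case in which the twist is a primitive third, fourth or sixth root of unity is precisely the point you concede you cannot complete, and it cannot be settled by quoting a ``Lehmer analogue'' of \eqref{eq9}: when $\zeta'\neq\pm1$ the quantity $\theta_1^{m}-\zeta'\theta_2^{m}$ is not (a rational multiple of) a Lucas or Lehmer number, so the results of \cite{41} do not apply to it directly. The paper's solution is the machinery of Sections 2 and 4: the partial cyclotomic polynomials $\Phi_{m,e}^{(i)}(x,y)$ with coefficients in $\mathbb Z[\zeta_e]$, whose value at $(\theta_1,\theta_2)$ divides $\theta_1^{m}-\zeta_e^{i}\theta_2^{m}$; an archimedean lower bound for $|\Phi_{m,e}^{(i)}(\lambda_1,\lambda_2)|$ via linear forms in two logarithms (Lemmas \ref{lem9}--\ref{lem11}); the congruence restriction mod $me$ on prime factors of $\Phi_{me}(\lambda_1,\lambda_2)$ (Lemma \ref{lem1}) to count the irreducibles that can occur; and Yu's $p$-adic estimates (via Lemma \ref{lem12}) to bound their multiplicities. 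Comparing the bounds yields Lemma \ref{lem13}, which produces an irreducible $\pi$ with $\pi$ or $\pi\bar\pi$ a rational prime exceeding $m\exp(\log m/103.95\log\log m)$ and dividing $u_n$. Your proposal names the obstacle but supplies no substitute for this construction, so the proof has a genuine gap at its central step.
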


The proof of Theorem \ref{Theorem 1} relies on arguments from \cite{41} as well as the work of Schinzel \cite{Sch} on primitive divisors in algebraic number fields.

For any non-degenerate binary recurrence sequence $(u_n)^\infty_{n=0}$ we are able to improve \eqref{eq7} for all positive integers $n$ except perhaps for a set of asymptotic density zero. Let $\varepsilon(n)$ be a real valued function on the positive integers for which $\lim_{n\rightarrow\infty}\varepsilon(n)=0.$ In \cite{39} Stewart proved that for all positive integers, except perhaps for a set of asymptotic density zero,
$$
P(u_n)>\varepsilon(n)n\log n;
$$
see the papers of Murty, S\'eguin and Stewart \cite{17} and Balaji and Luca \cite{BL} for related work. Combining the approaches of \cite{39} and \cite{41} we are able to prove the following result.

\begin{thm} \label{Theorem 2}
Let $(u_n)^\infty_{n=0}$ be a non-degenerate binary recurrence sequence. For all positive integers $n$, except perhaps a set of asymptotic density zero,\begin{equation} \label{eq12}
P(u_n)>n\exp(\log n/104\log\log n).
\end{equation}
 
\end{thm}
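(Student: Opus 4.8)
The plan is to combine the density argument of \cite{39} with the estimates for linear forms in logarithms used in \cite{41}. First I would dispose of the case in which $a/b$ and $\alpha/\beta$ are multiplicatively dependent: here Theorem~\ref{Theorem 1} already gives \eqref{eq12} for every $n$ exceeding an effectively computable constant, hence in particular for a set of $n$ of asymptotic density one. (This case includes every sequence with $u_0=0$, since then $a+b=u_0=0$ and $a/b=-1$, which is a root of unity.) So from now on assume that $a/b$ and $\alpha/\beta$ are multiplicatively independent, and in particular $u_0\neq 0$.

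The key point is that a \emph{primitive} prime divisor of $u_n$ must exceed $n$. For all but finitely many primes $p$ the sequence $(u_m\bmod p)$ is purely periodic; writing $\mathfrak p$ for a prime of $K=\mathbb{Q}(\sqrt{r^2+4s})$ above $p$ and $\bar\alpha,\bar\beta$ for the reductions of $\alpha,\beta$, one has $p\mid u_m$ precisely when $(\bar\alpha/\bar\beta)^m\equiv -b/a\pmod{\mathfrak p}$, so the set of such $m$ is a single arithmetic progression with common difference $e(p):=\ord(\bar\alpha/\bar\beta\bmod\mathfrak p)$, and $e(p)$ divides $p-\bigl(\tfrac{r^2+4s}{p}\bigr)$. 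If $p$ is a primitive prime divisor of $u_n$ then $n$ is the least term of this progression, whence $n<e(p)\le p+1$ and so $p\ge n$. Since $P(u_n)\to\infty$ and the least index of appearance of each fixed prime is a fixed integer, for $n$ large the mere existence of a primitive prime divisor of $u_n$ forces $P(u_n)\ge n$; it is the possible \emph{absence} of such a divisor (when all prime divisors of $u_n$ appear earlier, with large index of repetition) that confines the uniform bound \eqref{eq7} to the exponent $\tfrac12$.

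Next I would show, following the method of \cite{39}, that the set of $n$ for which $u_n$ has no primitive prime divisor has asymptotic density zero. If $n$ is such an exceptional integer, then every prime $p\mid u_n$ already divides some $u_{\lambda(p)}$ with $\lambda(p)<n$ (take $\lambda(p)$ least), and for each such $p$ one has $e(p)\le n$, $\lambda(p)<e(p)$, $e(p)\mid n-\lambda(p)$ and $p\equiv\pm1\pmod{e(p)}$. By a lower bound for linear forms in logarithms (in the complex case when $\lvert\alpha\rvert=\lvert\beta\rvert$) we have $\log\lvert u_n\rvert=n\log\lvert\alpha\rvert+O(\log n)$, so $u_n$ is assembled from such primes with total logarithmic weight $\asymp n$, while Yu's $\mathfrak p$-adic estimates bound the multiplicity of each such $p$ in $u_n$ by $O(\log n)$. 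Counting the admissible pairs $(\lambda(p),e(p))$ — the primes with a prescribed value of $\lambda(p)$ lie among the prime factors of the fixed integer $u_{\lambda(p)}$, and $e(p)\mid n-\lambda(p)$ together with $\lambda(p)<e(p)$ pins them down severely — one finds that these requirements can be met for only $o(x)$ integers $n\le x$, which is the density computation of \cite{39}.

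Finally, for every $n$ outside this density-zero set, $u_n$ has a primitive prime divisor $p\ge n$, so $P(u_n)\ge n$. To sharpen this to \eqref{eq12} I would run the argument of \cite{41}: supposing $P(u_n)\le n\exp(\log n/104\log\log n)$, every primitive prime divisor of $u_n$ lies in the short interval $[\,n,\,n\exp(\log n/104\log\log n)\,]$, there are $O(n/\log n)$ of them (each is $\ge n$ and divides $u_n$), and each occurs in $u_n$ with multiplicity controlled by $\mathfrak p$-adic linear forms in logarithms; inserting this into the estimate $\log\lvert u_n\rvert\gg n$ for the size of $u_n$ and of its primitive part yields a contradiction for $n$ large, the constant $1/104$ coming out exactly as in \cite{41}. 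The main obstacle is the density estimate of the third paragraph — one has to control, uniformly in $n$, how the exceptional imprimitive structure can conspire, and make this compatible with the linear-forms bookkeeping of the last step; the boundary case $\lvert\alpha\rvert=\lvert\beta\rvert$ and the finitely many small primes dividing $ab\,\alpha\beta\,(r^2+4s)$ require the explicit lower bound $\log\lvert u_n\rvert\gg n$ in place of the trivial estimate.
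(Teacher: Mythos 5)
Your overall route --- dispose of the multiplicatively dependent case by Theorem~\ref{Theorem 1}, then for the independent case show that for density-one $n$ the term $u_n$ has a primitive prime divisor (hence $P(u_n)\gtrsim n$), and finally sharpen to \eqref{eq12} by a per-$n$ argument modelled on \cite{41} --- is not the paper's argument, and as sketched it has two genuine gaps. First, the density-zero claim for the set of $n$ without a primitive prime divisor is only asserted: your ``counting the admissible pairs $(\lambda(p),e(p))$'' step is not carried out, and it is not ``the density computation of \cite{39}''; the method of \cite{39} (and of this paper) is an averaging argument over a dyadic block, not a primitive-divisor count. Second, and more seriously, your final sharpening step cannot close. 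You invoke a multiplicity bound $O(\log n)$ for each prime in $u_n$, but Yu's estimate (Lemma \ref{lem3}) only gives $\ord_p u_n < p\exp(-\log p/51.9\log\log p)\log n$, i.e.\ nearly $p\log n$; and you need $\log$ of the \emph{primitive part} of $u_n$ to be $\gg n$, which is exactly what is unavailable when $a/b$ and $\alpha/\beta$ are multiplicatively independent: there is no cyclotomic factorization, no congruence $p\equiv\pm1\pmod n$ forcing primitive primes to be sparse, and the non-primitive part may in principle swallow all of $u_n$. With the correct multiplicity bound, $O(n/\log n)$ primes in $[n,\,n\exp(\log n/104\log\log n)]$ can easily account for $\log|u_n|\asymp n$, so no contradiction arises; in \cite{41} the argument works only because the primitive primes of $\Phi_n(\alpha,\beta)$ are $\equiv\pm1\pmod n$ (hence only $O(T/n)$ of them up to $T$) and $\log|\Phi_n(\alpha,\beta)|\gg\phi(n)$.

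The paper's proof is structurally different and avoids both issues. Assuming a set of $m$ of positive upper density $\delta$ with $P(u_m)<m\exp(\log m/103.95\log\log m)$, it fixes a large $n$ with at least $\delta n/2$ such $m$ in $[n,2n]$, puts $T=2n\exp(\log 2n/103.95\log\log 2n)$, and compares the lower bound $\log\big|\prod_{m\in M}u_m\big|\gg\delta n^2$ with an upper bound obtained prime by prime: for each $p<T$ the single term $u_{m(p)}$ of maximal $p$-adic valuation is bounded by Yu's estimate (Lemma \ref{lem3}), while the $p$-adic content of all the \emph{other} terms in the block is controlled through the identity $u_m-\beta^ru_{m-r}=a'\alpha^{m-r}t_r$, the exact valuation structure of the associated Lucas numbers (Lemma \ref{prop:2}, with the factorial factors $|s!|_p$ supplying the saving), and Lemma \ref{lem8} for primes of large index of appearance; the total is $o(n^2)$, a contradiction. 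Note also the preliminary reduction to $([\alpha],[\beta])=[1]$ via the subsequences $v_n=g^{-n}u_{2n}$ and $w_n=g^{-n}u_{2n+1}$, needed so that Lemma \ref{prop:2} applies; your sketch omits this. If you want to salvage your plan, you would have to prove, not assume, a lower bound for the primitive part of $u_n$ for density-one $n$ --- which is essentially the theorem itself --- so the block-averaging argument is the missing idea.
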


The proofs of Theorem \ref{Theorem 1} and Theorem \ref{Theorem 2} ultimately depend on an estimate for p-adic linear forms in the logarithms of algebraic numbers due to Yu \cite{Yu} and,
as discussed in \cite{41}, the constant $104$ which appears in our estimates has no arithmetical significance but instead is a consequence of the bounds in \cite{Yu}.
For a more detailed historical account of these topics see \cite{St}.

\section{Cyclotomic polynomials}

For each positive integer $k$ put $\zeta_k=e^{2\pi i/k}$. Let $n$ be a positive integer. The $n$-th cyclotomic polynomial $\Phi_n(x,y)$ is given by
\begin{equation} \label{eq13}
\Phi_n(x,y)=\prod^n_{\substack{j=1 \\ (j,n)=1}}(x-\zeta^j_ny).
\end{equation}
Let $e$ be a positive integer and let $i$ be an integer. Put
\begin{equation} \label{eq14}
\Phi_{n,e}^{(i)}(x,y)=\prod^{ne}_{\substack{j=1 \\ (j,ne)=1\\ j\equiv i \bmod e}}(x-\zeta^j_{ne}y).
\end{equation}
Note that if $(i,e)>1$ then $\Phi_{n,e}^{(i)}(x,y)=1$ and that 
\begin{equation} \label{eq15}
\prod^e_{\substack{i=1 \\ (i,e)=1}}\Phi_{n,e}^{(i)}(x,y)=\Phi_{ne}(x,y).
\end{equation}
We remark that when $(i,e)=1$ the degree of $\Phi_{n,e}^{(i)}(x,y)$ is $\phi(ne)/\phi(e)$ where $\phi( )$ denotes Euler's totient function.

For any integer $i$ we have
\begin{equation} \label{eq16}
\prod^{ne}_{\substack{j=1\\ j\equiv i \bmod e}}(x-\zeta^j_{ne}y)= x^n-\zeta^i_ey^n
\end{equation}
and so by the inclusion-exclusion principle, see also Lemma 4 of \cite{Sch}, when $(i,e)=1$
\begin{equation} \label{eq17}
\Phi_{n,e}^{(i)}(x,y)=\prod_{\substack{m|n \\ (m,e)=1\\ \overline mm\equiv i \bmod e}}(x^{n/m}-\zeta^{\overline m}_{e}y^{n/m})^{\mu(m)}.
\end{equation}
It follows from \eqref{eq17} that $\Phi_{n,e}^{(i)}(x,y)$ has coefficients in $\mathbb Q(\zeta_e)$ and then from \eqref{eq14} that the coefficients of $\Phi_{n,e}^{(i)}(x,y)$ are from $\mathbb Z[\zeta_e]$, the ring of algebraic integers of $\mathbb Q(\zeta_e)$.

Next we put
\begin{equation} \label{eq18}
\Psi_{n,e}^{(i)}(x,y)=\prod^{ne}_{\substack{j=1 \\ (j,ne)>1\\ j\equiv i \bmod e}}(x-\zeta^j_{ne}y).
\end{equation}
By \eqref{eq16} we have
\begin{equation} \label{eq119}
\Phi_{n,e}^{(i)}(x,y)\Psi_{n,e}^{(i)}(x,y)=x^n-\zeta^i_{e}y^n.
\end{equation}
Since $\Phi_{n,e}^{(i)}(x,y)$ is in $\mathbb Z[\zeta_e][x,y]$ we see from \eqref{eq18} and \eqref{eq119} that $\Psi_{n,e}^{(i)}(x,y)$ is also in $\mathbb Z[\zeta_e][x,y]$.

\section{Divisibility of values of the cyclotomic polynomial and of Lucas numbers}

We first record two results describing the arithmetical character of values of the cyclotomic polynomial.
\begin{lem} \label{lem1}
Suppose that $(\alpha+\beta)^2$ and $\alpha\beta$ are coprime non-zero integers and that $\alpha/\beta$ is not a root of unity. If $n>4$ and $n\neq 6,12$ then $P(n/(3,n))$ divides $\Phi_n(\alpha,\beta)$ to at most the first power. All other prime factors of $\Phi_n(\alpha,\beta)$ are congruent to $\pm 1\pmod{n}.$
\end{lem}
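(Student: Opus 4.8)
The plan is to relate $\Phi_n(\alpha,\beta)$ to the ordinary cyclotomic values $\Phi_n(x)$ via the identity $\Phi_n(\alpha,\beta)=\beta^{\phi(n)}\Phi_n(\alpha/\beta)$, and then transport the classical theory of prime divisors of $\Phi_n$ evaluated at roots of unity — or rather, of the associated Lucas/Lehmer sequence — to the present homogeneous setting. Concretely, set $\gamma=\alpha/\beta$; since $(\alpha+\beta)^2$ and $\alpha\beta$ are coprime nonzero integers and $\gamma$ is not a root of unity, the pair $(\alpha,\beta)$ is (up to the usual normalization) a Lehmer pair, and $\Phi_n(\alpha,\beta)$ is, up to a bounded factor supported on primes dividing $(\alpha+\beta)^2\alpha\beta$, the $n$-th term of the associated Lehmer sequence. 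The coprimality hypothesis is exactly what forces this ``bad'' factor to be trivial for all the primes in question, so that $\Phi_n(\alpha,\beta)$ is a genuine integer whose prime factorization I can analyze.

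First I would recall the standard dichotomy for a prime $p\mid\Phi_n(\alpha,\beta)$: either $p$ is \emph{intrinsic}, meaning $p\mid n/(3,n)$ in the Lehmer setting (the appearance of $3$ rather than the generic rank-of-apparition prime is the feature distinguishing Lehmer sequences from Lucas sequences, and is why $n=6,12$ and small $n$ must be excluded), or $p$ is a \emph{primitive-type} prime whose rank of apparition is exactly $n$, which by Fermat's little theorem combined with the order of $\alpha/\beta$ modulo $p$ forces $n\mid p-1$ or $n\mid p+1$, i.e. $p\equiv\pm 1\pmod n$. I would cite the relevant lemma of Schinzel \cite{Sch} (referenced as ``Lemma 4 of \cite{Sch}'' in the text around \eqref{eq17}) for the exact statement in the algebraic-number-field language, together with the factorization \eqref{eq17} which expresses $\Phi_{n,e}^{(i)}$ — and in particular $\Phi_n=\Phi_{n,1}^{(1)}$ — as a product of terms $\alpha^{n/m}-\beta^{n/m}$ to signed powers; this is precisely the mechanism by which the rank-of-apparition argument is made rigorous.

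Next I would handle the \emph{multiplicity} claim for the intrinsic prime $q=P(n/(3,n))$. The point is that if $q^2\mid\Phi_n(\alpha,\beta)$ then $q$ divides $\Phi_n(\alpha,\beta)$ and also, via the telescoping that expresses $\alpha^n-\beta^n$ as a product of $\Phi_d(\alpha,\beta)$ over $d\mid n$, one gets $q^2$ dividing a term $\alpha^{n/q}-\beta^{n/q}$; comparing $q$-adic valuations with the ``lifting the exponent'' relation $v_q(\alpha^{n}-\beta^{n}) = v_q(\alpha^{n/q}-\beta^{n/q})+v_q(q)$ (valid once $q\mid\alpha^{n/q}-\beta^{n/q}$, which holds because $q$ is intrinsic), one derives a contradiction unless $n$ is among the excluded small values or $q=2,3$ require separate bookkeeping — this is exactly where the hypotheses $n>4$, $n\neq 6,12$ and the normalization ``$/(3,n)$'' earn their keep. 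The routine $q$-adic computations I would not grind through, but I would organize them by the cases $q\nmid\alpha\beta$ versus the ramified behaviour, using the coprimality of $(\alpha+\beta)^2$ and $\alpha\beta$ to rule out the degenerate possibilities.

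The main obstacle I anticipate is bookkeeping at the prime $q=2$ and around $n=6,12$: the Lehmer-sequence analogue of the primitive divisor theorem has genuinely exceptional behaviour there (reflected in the $(3,n)$ in the statement and the explicit exclusion of $6$ and $12$), and the ``lifting the exponent'' step behaves differently for $q=2$. I would isolate these in a short case analysis, quoting the Bilu--Hanrot--Voutier classification or Schinzel's own primitive-divisor results \cite{26},\cite{Sch} to confirm that, outside the listed exceptions, every primitive-type prime divisor does have rank of apparition exactly $n$ and hence is $\equiv\pm1\pmod n$, and that the intrinsic prime occurs to the first power only. Everything else — the reduction to $\Phi_n(\alpha/\beta)$, the order argument modulo $p$, and the product formula \eqref{eq17} — is then essentially formal.
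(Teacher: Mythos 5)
The paper does not reprove this statement at all: its proof is a one-line citation to Lemma 6 of \cite{37} (Stewart, 1977), which is exactly this lemma and rests on the classical Carmichael--Lehmer theory of ranks of apparition for Lehmer sequences. Your sketch does follow the classical route behind that cited lemma, and the ingredients you name (rank of apparition, the congruence forced when the rank is exactly $n$, a lifting-the-exponent computation for the multiplicity of the intrinsic prime, special care at $2$, $3$ and at $n\le 4$, $n=6,12$) are the right ones.

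As a proof, however, the proposal has genuine gaps. The heart of the lemma --- that the only prime divisor of $\Phi_n(\alpha,\beta)$ not congruent to $\pm 1\pmod n$ is specifically $P(n/(3,n))$, and only to the first power --- is asserted as a ``standard dichotomy'' rather than derived: one must show that such a prime $p$ satisfies $n=\ell(p)p^{j}$ with $j\ge 1$ and $\ell(p)\le p+1$, so that $p$ is forced to be the largest prime factor of $n$ apart from the small-prime interactions that the factor $(3,n)$ and the exclusions $n\le 4$, $n=6,12$ are there to absorb; none of this is carried out. The citations you offer do not supply it either: Lemma 4 of \cite{Sch} is only the inclusion--exclusion identity \eqref{eq17}, and the Bilu--Hanrot--Voutier classification and Schinzel's primitive-divisor results \cite{26}, \cite{Sch} are existence theorems for primitive divisors, not classifications of all prime divisors of $\Phi_n(\alpha,\beta)$; the correct reference is precisely Lemma 6 of \cite{37}, i.e.\ what the paper cites. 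Finally, the multiplicity step is misstated: $q^{2}\mid\Phi_n(\alpha,\beta)$ does not yield $q^{2}\mid \alpha^{n/q}-\beta^{n/q}$; the correct argument is that $\Phi_n(\alpha,\beta)$ divides $(\alpha^{n}-\beta^{n})/(\alpha^{n/q}-\beta^{n/q})$, whose $q$-adic valuation equals $1$ by lifting the exponent when $q$ is odd and $q\mid \alpha^{n/q}-\beta^{n/q}$, with $q=2$ handled separately. So the plan points in the right direction, but as written it defers the substantive verifications either to computations it declines to perform or to references that do not contain them; the economical repair is simply to cite Lemma 6 of \cite{37}, as the paper does.
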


\begin{proof}
This is Lemma 6 of \cite{37}. 
\end{proof}

Our next result follows from the proof of Theorem 1.1 of \cite{41}. Note that we do not require $(\alpha+\beta)^2$ and $\alpha\beta$ to be coprime.
\begin{lem} \label{lem 2}
Let $\alpha$ and $\beta$ be complex numbers such that $(\alpha+\beta)^2$ and $\alpha\beta$ are non-zero integers and $\alpha/\beta$ is not a root of unity. There exists a positive number $C,$ which is effectively computable in terms of $\alpha$ and $\beta$, such that for $n>C,$
\begin{equation} \label{eq19}
P(\Phi_n(\alpha,\beta))>n\exp(\log n/103.95\log\log n).
\end{equation}
\end{lem}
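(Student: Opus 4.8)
The plan is to run the argument behind the proof of Theorem~1.1 of \cite{41}, with the primitive part $\Phi_n(\alpha,\beta)$ playing the role there of the Lucas number $(\alpha^n-\beta^n)/(\alpha-\beta)$ (or, when $\alpha$ and $\beta$ are irrational, of the associated Lehmer number $(\alpha^n-\beta^n)/(\alpha^2-\beta^2)$). Working directly with $\Phi_n(\alpha,\beta)$ is what lets one dispense with the coprimality of $(\alpha+\beta)^2$ and $\alpha\beta$, and it is also responsible for the marginally sharper constant. One may assume $n\geq 3$; then $\phi(n)$ is even, so $\Phi_n(\alpha,\beta)$, being fixed both by $(\alpha,\beta)\mapsto(\beta,\alpha)$ and by $(\alpha,\beta)\mapsto(-\alpha,-\beta)$, is a rational integer, and $P(\Phi_n(\alpha,\beta))$ is its ordinary greatest prime factor.

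The first step is to estimate $|\Phi_n(\alpha,\beta)|$ from below. M\"obius inversion of $\alpha^n-\beta^n=\prod_{d\mid n}\Phi_d(\alpha,\beta)$ gives $\Phi_n(\alpha,\beta)=\beta^{\phi(n)}\prod_{d\mid n}(z^{n/d}-1)^{\mu(d)}$ with $z=\alpha/\beta$. If $|z|\neq 1$ one reads off $\log|\Phi_n(\alpha,\beta)|=\phi(n)\log\max(|\alpha|,|\beta|)+O(1)$, and, since $\alpha\beta$ is a nonzero integer with $|\alpha|\neq|\beta|$, one has $\max(|\alpha|,|\beta|)>1$. If $|z|=1$ — so that $\alpha,\beta$ are a genuine Lehmer pair, $z$ lies on the unit circle, and, as $z$ is not a root of unity, $|\alpha\beta|\geq 2$ — one gets instead $\log|\Phi_n(\alpha,\beta)|=\phi(n)\log\sqrt{|\alpha\beta|}+\sum_{d\mid n}\mu(d)\log|z^{n/d}-1|$, where Baker's theorem gives a lower bound $|z^{n/d}-1|\geq n^{-O(1)}$ and the terms with $\mu(d)=-1$ are nonnegative, whence $\log|\Phi_n(\alpha,\beta)|\geq\phi(n)\log\sqrt{|\alpha\beta|}-n^{o(1)}$. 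In every case $\log|\Phi_n(\alpha,\beta)|\gg_{\alpha,\beta}\phi(n)$ for $n>C$. The second step records the arithmetic of the prime factors: using Schinzel's treatment of primitive divisors in algebraic number fields \cite{Sch}, together with the factorization of $\Phi_n(x,y)$ over $\mathbb{Q}(\zeta_e)$ developed in Section~2, one shows that, apart from the $O(\log n)$ primes dividing $n(\alpha-\beta)^2\alpha\beta$ — which change $\log|\Phi_n(\alpha,\beta)|$ by at most $O((\log n)^2)$ — every prime factor $q$ of $\Phi_n(\alpha,\beta)$ is a primitive divisor, so its rank of apparition equals $n$ and hence $n\mid q\mp 1$; in particular $q\geq n-1$.

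Finally, suppose for contradiction that $P(\Phi_n(\alpha,\beta))\leq Y:=n\exp(\log n/(103.95\log\log n))$. Then every primitive prime $q$ dividing $\Phi_n(\alpha,\beta)$ lies in $[\,n-1,\,Y\,]$ and satisfies $q\equiv\pm1\pmod n$, so the Brun--Titchmarsh inequality bounds their number by $O\!\left(Y/(\phi(n)\log(Y/n))\right)$, while for each such $q$ the $p$-adic estimate of Yu \cite{Yu}, applied to the linear form $(\alpha/\beta)^n-1$, bounds $\ord_q(\Phi_n(\alpha,\beta))$. Feeding these two bounds into $\log|\Phi_n(\alpha,\beta)|=\sum_q\ord_q(\Phi_n(\alpha,\beta))\log q$ and comparing with the lower bound $\log|\Phi_n(\alpha,\beta)|\gg\phi(n)$ from the first step produces, after careful bookkeeping, a contradiction once $n$ is large. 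I expect this last step to be the main obstacle: the two bounds just quoted are, at face value, of roughly the same strength as what a contradiction requires, so one has to follow the constants through Yu's estimate and through the sum over the progressions $\pm1\bmod n$ with real precision — both to get a genuine contradiction rather than merely $P(\Phi_n(\alpha,\beta))>n^{1-o(1)}$, and to see where the numerical constant $103.95$ comes from (which, as the paper remarks, has no arithmetic significance and merely reflects the shape of Yu's bound). This is exactly the computation in the proof of Theorem~1.1 of \cite{41}, which carries over to $\Phi_n(\alpha,\beta)$ with only cosmetic modifications.
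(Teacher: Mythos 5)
Your overall strategy -- rerunning the computation behind Theorem 1.1 of \cite{41} with $\Phi_n(\alpha,\beta)$ in the role of the Lucas/Lehmer number -- is exactly what the paper does (its proof is a one-line appeal to the second last line of that proof), and your final step correctly identifies where the constant $103.95$ comes from. The genuine gap is in your second step, i.e.\ precisely at the one point where the lemma goes beyond \cite{41}: the removal of the hypothesis that $(\alpha+\beta)^2$ and $\alpha\beta$ are coprime. You assert that the primes dividing $n(\alpha-\beta)^2\alpha\beta$ alter $\log|\Phi_n(\alpha,\beta)|$ by at most $O((\log n)^2)$ and that every other prime factor is a primitive divisor with rank of apparition $n$, hence $\equiv\pm1\pmod{n}$. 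The congruence statement you are invoking is Lemma \ref{lem1} (Lemma 6 of \cite{37}), whose hypotheses include coprimality, and the quantitative claim is simply false when $g=((\alpha+\beta)^2,\alpha\beta)>1$: putting $\lambda_1=\alpha/\sqrt{g}$, $\lambda_2=\beta/\sqrt{g}$, homogeneity gives $\Phi_n(\alpha,\beta)=g^{\phi(n)/2}\Phi_n(\lambda_1,\lambda_2)$ with both factors nonzero rational integers for $n$ large, so the primes dividing $g$ -- which lie inside your excluded set -- contribute at least $\tfrac12\phi(n)\log g$ to $\log|\Phi_n(\alpha,\beta)|$, a positive proportion of the total rather than $O((\log n)^2)$ (example: $\alpha,\beta=1\pm i\sqrt5$, where $2^{\phi(n)/2}$ divides $\Phi_n(\alpha,\beta)$). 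With that accounting error your concluding comparison is unsound as written: you never show that the part of $\log|\Phi_n(\alpha,\beta)|$ carried by the primes $\equiv\pm1\pmod n$ is still $\gg\phi(n)$, which is what the contradiction needs.

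The repair is the normalization the paper itself uses in Section 4: pass to the pair $(\lambda_1,\lambda_2)$, for which $(\lambda_1+\lambda_2)^2=(\alpha+\beta)^2/g$ and $\lambda_1\lambda_2=\alpha\beta/g$ are coprime nonzero integers, $\lambda_1/\lambda_2=\alpha/\beta$ is not a root of unity, and $\max(|\lambda_1|,|\lambda_2|)$ is effectively bounded away from $1$ (compare \eqref{eq20}). Then $P(\Phi_n(\alpha,\beta))\ge P(\Phi_n(\lambda_1,\lambda_2))$, and the coprime-case estimate -- the second last line of the proof of Theorem 1.1 of \cite{41}, whose derivation is exactly your third step -- applied to $(\lambda_1,\lambda_2)$ yields \eqref{eq19}. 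Equivalently, keep your argument but replace ``the excluded primes contribute $O((\log n)^2)$'' by ``they contribute $\tfrac12\phi(n)\log g+O(\log n)$'' and observe that the remainder, about $\phi(n)\log\max(|\lambda_1|,|\lambda_2|)$, is still a positive multiple of $\phi(n)$; without one of these fixes the case $g>1$, which is the whole point of the lemma, is not covered. (Minor points that do not affect the verdict: in the case $|\alpha|\neq|\beta|$ your error term should be $O(2^{\omega(n)})$ rather than $O(1)$, and the paper counts the primes $\equiv\pm1\pmod n$ below the threshold trivially, by about $2P/n$, rather than via Brun--Titchmarsh; also the appeal to \cite{Sch} and the $\mathbb Q(\zeta_e)$ factorization of Section 2 is not needed here -- that machinery is for Lemma \ref{lem13}.)
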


\begin{proof}
This follows from the second last line in the proof of Theorem 1.1 of \cite{41}.
\end{proof}

For any non-zero rational number $x$ let $\ord_p x$ denote the p-adic order of $x$.

\begin{lem}  \label{lem3}
Let $(u_n)^\infty_{n=0}$ be a non-degenerate binary recurrence sequence as in \eqref{eq2} with $a/b$ and $\alpha/\beta$ multiplicatively independent. There exists a positive number $C$ which is effectively computable in terms of $a,$ $b,$ $\alpha$ and $\beta$ such that if $p$ exceeds $C$ then
$$
\ord_pu_n<p\exp(-\log p/51.9\log\log p)\log n.
$$
\end{lem}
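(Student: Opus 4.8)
The plan is to reduce $\ord_p u_n$ to the $\mathfrak p$--adic valuation of a two--term linear form in logarithms and then to invoke the estimate of Yu \cite{Yu}, tracking its numerical constant exactly as in \cite{41}. Put $K=\mathbb Q(\alpha,\beta,a,b)$. Since $\alpha,\beta$ are the roots of $x^{2}-rx-s$ and, by \eqref{eq5}, $a,b\in\mathbb Q(\alpha,\beta)$, we have $K=\mathbb Q(\sqrt{r^{2}+4s})$, so $d:=[K:\mathbb Q]\le 2$. Because $\alpha\beta$ and $(\alpha-\beta)^{2}$ are non-zero integers and $ab=(u_0u_1r-u_1^{2}+u_0^{2}s)/(r^{2}+4s)$ is a non-zero rational, there is an effectively computable $C_1$, in terms of $a,b,\alpha,\beta$, such that for every rational prime $p>C_1$ the prime $p$ is unramified in $K$ and $\alpha,\beta,a,b$ are $\mathfrak p$--adic units for every prime ideal $\mathfrak p$ of $\mathcal O_K$ above $p$; then $e(\mathfrak p\mid p)=1$, so $\ord_p x=\ord_{\mathfrak p}x$ for $x\in\mathbb Q$. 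Fix such $p$ and $\mathfrak p$, and assume $n\ge 2$. Writing $u_n=b\beta^{n}\bigl(1+(a/b)(\alpha/\beta)^{n}\bigr)$ gives
\[
\ord_p u_n=\ord_{\mathfrak p}u_n=\ord_{\mathfrak p}\Lambda ,\qquad \Lambda:=(-a/b)(\alpha/\beta)^{n}-1 .
\]
Here $\alpha/\beta$ is not a root of unity since the sequence is non-degenerate, and $\Lambda\ne0$: if $(-a/b)(\alpha/\beta)^{n}=1$ then $(a/b)^{2}(\alpha/\beta)^{2n}=1$, contradicting the multiplicative independence of $a/b$ and $\alpha/\beta$.

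Next I would apply Yu's $p$--adic estimate to $\ord_{\mathfrak p}\Lambda$. The form has two logarithms, $K$ has degree at most $2$, the heights of $-a/b$ and of $\alpha/\beta$ are bounded by an effectively computable quantity depending only on $a,b,\alpha,\beta$ (if $-a/b$ is itself a root of unity the form reduces to one logarithm with no change in what follows), and the parameter $B$ equals $\max(1,n)=n$. Yu's theorem then supplies a bound $\ord_{\mathfrak p}\Lambda\le C_2\,\vartheta(p)\log n$, where $C_2$ is effectively computable in terms of $a,b,\alpha,\beta$ and $\vartheta(p)$ is the prime--dependent factor. Estimating $C_2\vartheta(p)$ by the same computation carried out in \cite{41} — which is exactly where the (arithmetically meaningless) constant is produced — gives $C_2\vartheta(p)<p\exp\!\bigl(-\log p/51.9\log\log p\bigr)$ as soon as $p$ exceeds an effectively computable $C\ge C_1$, and the lemma follows from the displayed identity.

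The one place where the argument needs more than routine adaptation of \cite{41} — and the step I expect to be the main obstacle — is the estimation of $\vartheta(p)$ when $p$ is inert in $K$: then the residue degree $f(\mathfrak p\mid p)$ is $2$, and a literal application of Yu's inequality contributes a factor $p^{2}$ where one needs only $p^{1+o(1)}$. This is resolved as the inert primes are resolved in \cite{41}. When $d=2$ the non-trivial automorphism of $K/\mathbb Q$ sends $\alpha\mapsto\beta$ and, by \eqref{eq5}, $a\mapsto b$, so $N_{K/\mathbb Q}\bigl((a/b)(\alpha/\beta)^{n}\bigr)=1$; hence $(-a/b)(\alpha/\beta)^{n}$ lies on the one--dimensional torus $\ker N_{K/\mathbb Q}$, whose reduction modulo an inert $p$ is a group of order $p+1$, and Yu's estimate for logarithmic forms on group varieties, applied to this torus, yields the factor $p^{1+o(1)}$ — in fact $\le p\exp(-\log p/51.9\log\log p)$ — uniformly in the splitting type of $p$. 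After this the bookkeeping is identical to that of \cite{41}, and I would only have to verify that replacing $\alpha^{n}-\beta^{n}$ there by $u_n=a\alpha^{n}+b\beta^{n}$ (equivalently, $1$ by $a/b$ in $\Lambda$) affects nothing beyond the non-vanishing of $\Lambda$, which is guaranteed by the multiplicative independence hypothesis.
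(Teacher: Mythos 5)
The paper's own ``proof'' of this lemma is a one-line citation: it is Lemma 7 of \cite{St}, whose argument is essentially the reduction you describe. Your scaffolding is sound and matches that source: writing $u_n=b\beta^n\bigl((a/b)(\alpha/\beta)^n+1\bigr)$, noting that for all sufficiently large $p$ the numbers $a,b,\alpha,\beta$ are $\mathfrak{p}$-adic units and $p$ is unramified, so that $\ord_p u_n=\ord_{\mathfrak{p}}\bigl((-a/b)(\alpha/\beta)^n-1\bigr)$, and using the multiplicative independence of $a/b$ and $\alpha/\beta$ to guarantee the form is nonzero (indeed $-a/b$ and $\alpha/\beta$ are then also multiplicatively independent, and $-a/b$ cannot be a root of unity, so the degenerate subcase you mention never occurs). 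The observation that the nontrivial automorphism sends $\alpha\mapsto\beta$ and, by \eqref{eq5}, $a\mapsto b$, so that the relevant numbers have norm $1$, is correct and is the right structural fact.

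The genuine gap is at the step you yourself flag as the main obstacle. You dispose of the residue-degree-two case by invoking ``Yu's estimate for logarithmic forms on group varieties, applied to this torus,'' asserted to give the factor $p\exp(-\log p/51.9\log\log p)$ uniformly in the splitting type of $p$. No such off-the-shelf statement is available: the results of \cite{Yu} bound $\ord_{\mathfrak{p}}(\alpha_1^{b_1}\cdots\alpha_k^{b_k}-1)$ with an explicit dependence on $\mathfrak{p}$ (through $p^{f_{\mathfrak{p}}}$ and related parameters), not a theorem about arbitrary subtori, and extracting a bound of the shape $p\exp(-\log p/51.9\log\log p)\log n$ --- in particular the first power of $p$ when $f_{\mathfrak{p}}=2$, and the constant $51.9$ --- is exactly the nontrivial content of Lemma 7 of \cite{St} (and of Lemma 4.3 of \cite{41} in the Lucas case). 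As written, your argument replaces that content by an appeal to a result in a form in which it does not exist, plus ``the same computation carried out in \cite{41}'' without carrying it out; so the decisive quantitative step is unproved. To make this self-contained you would have to quote Yu's estimate in the precise form used in \cite{41} and \cite{St} and verify that the norm-one structure (the reductions lying in a subgroup of order $p+1$ when $p$ is inert) can be fed into that statement with the stated constant; short of that, the correct proof of this lemma is the paper's: cite Lemma 7 of \cite{St}.
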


\begin{proof}
This is Lemma 7 of \cite{St}.
\end{proof}

We shall now describe the prime decomposition of terms of a Lucas sequence $(t_n)^\infty_{n=0}$ .

\begin{lem} \label{lem4} 
Let $(t_n)^\infty_{n=0}$ be a Lucas sequence as in \eqref{eq10}. If $p$ is a prime number which does not divide $\alpha\beta$ then $p$ divides $t_n$ for some positive integer $n$ and if $l$ is the smallest positive integer for which $p$ divides $t_l$ then

$$
l\leq p+1.
$$
\end{lem}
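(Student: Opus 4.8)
The plan is to identify the rank of apparition of $p$ by reducing the Binet formula \eqref{eq10} modulo $p$. First I would record the standing facts: writing $r=\alpha+\beta$ and $s=-\alpha\beta$ (which are integers), one has $t_n=\sum_{i+j=n-1}\alpha^i\beta^j$ for $n\geq 1$, so each $t_n$ is a rational integer, and the hypothesis $p\nmid\alpha\beta$ guarantees that neither $\alpha$ nor $\beta$ vanishes modulo a prime $\mathcal{P}$ of the ring of integers of $\mathbb{Q}(\alpha)$ lying above $p$. Then I would run a case analysis on how the characteristic polynomial $x^2-rx-s$ factors modulo $p$, equivalently on $\left(\frac{D}{p}\right)$ with $D=r^2+4s=(\alpha-\beta)^2$, treating $p=2$ separately.

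For $p$ odd with $p\nmid D$ the polynomial has two distinct nonzero roots mod $p$. If these lie in $\mathbb{F}_p$, Fermat's little theorem gives $\bar\alpha^{\,p-1}=\bar\beta^{\,p-1}=1$, so $\bar t_{p-1}=(\bar\alpha^{\,p-1}-\bar\beta^{\,p-1})/(\bar\alpha-\bar\beta)=0$ (the denominator being nonzero since $p\nmid D$), whence $p\mid t_{p-1}$. If instead $x^2-rx-s$ is irreducible mod $p$, its roots lie in $\mathbb{F}_{p^2}$ and are interchanged by the Frobenius $x\mapsto x^p$, so $\bar\alpha^{\,p+1}=\bar\alpha^p\bar\alpha=\bar\alpha\bar\beta=\bar\beta^p\bar\beta=\bar\beta^{\,p+1}$, giving $\bar t_{p+1}=0$ in $\mathbb{F}_{p^2}$ and hence $p\mid t_{p+1}$. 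For $p$ odd with $p\mid D$ one has $\bar\alpha=\bar\beta\neq 0$ modulo $\mathcal{P}$, and reducing $t_p=\sum_{i+j=p-1}\alpha^i\beta^j$ (a sum of $p$ terms each congruent to $\bar\alpha^{\,p-1}$) gives $t_p\equiv p\,\bar\alpha^{\,p-1}\equiv 0$, so $p\mid t_p$. Finally, for $p=2$ the hypothesis forces $s$ odd, and then one of $t_2=r$, $t_3=r^2+s$ is even. In every case the least index $l$ with $p\mid t_l$ satisfies $l\leq p+1$.

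I would close by justifying the step that converts a congruence modulo $\mathcal{P}$ (or in $\mathbb{F}_{p^2}$) into an honest divisibility in $\mathbb{Z}$: each time, the integer $t_n$ produced lies in $\mathcal{P}$, and since $\mathcal{P}\cap\mathbb{Z}=p\mathbb{Z}$ this means $p\mid t_n$. I do not expect a genuine obstacle here — this is the classical statement on the rank of apparition of a prime in a Lucas sequence (so it could alternatively be quoted from the literature), and the only mildly delicate point is keeping the split/inert/ramified trichotomy and the $p=2$ case organized; everything else is routine.
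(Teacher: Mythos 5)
Your argument is correct. Note, though, that the paper does not prove this lemma at all: it simply quotes Lemma 7 of Stewart's 1982 paper \cite{39}, exactly as you anticipate in your closing remark. What you supply is the classical self-contained proof of the rank-of-apparition bound, organized by the splitting of $x^2-rx-s$ modulo $p$: split ($p\nmid D$, roots in $\mathbb F_p$) gives $l\leq p-1$ via Fermat, inert gives $l\leq p+1$ via the Frobenius swap $\bar\alpha^p=\bar\beta$, ramified ($p\mid D$) gives $l\leq p$ from $t_p\equiv p\,\bar\alpha^{p-1}\pmod{\mathcal P}$, and $p=2$ is settled by inspecting $t_2=r$ and $t_3=r^2+s$ with $s$ odd. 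All the steps check out, including the descent from a congruence modulo $\mathcal P$ to divisibility in $\mathbb Z$ via $\mathcal P\cap\mathbb Z=p\mathbb Z$; the only presentational refinement I would suggest is that in the inert case the cleanest reduction map is $\mathbb Z[\alpha]\cong\mathbb Z[x]/(x^2-rx-s)\to\mathbb F_p[x]/(\bar x^2-\bar r\bar x-\bar s)\cong\mathbb F_{p^2}$, which sends $\alpha,\beta$ to the two roots and makes the nonvanishing of $\bar\alpha-\bar\beta$ automatic (irreducible over $\mathbb F_p$ with $p$ odd forces distinct roots), and which also covers the degenerate-looking situation where $\alpha,\beta$ are rational integers (then only the split and ramified cases occur). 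So the trade-off is simply that the paper keeps the exposition short by citation, while your route makes the lemma independent of \cite{39} at the cost of a page of standard finite-field bookkeeping.
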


\begin{proof}
This follows, for example, from Lemma 7 of \cite{39}.
\end{proof}

For any rational number $x$ let $\abs{x}_p$ denote the p-adic value of $x$, normalized so that $\abs{p}_p = p^{-1}.$
\begin{lem}\label{prop:2}
Let $\left\{t_n\right\}_{n=0}^{\infty}$ be a Lucas sequence, as in \eqref{eq10}, with $\alpha +\beta$ and $\alpha\beta$ coprime. Let $p$ be a prime number which does not divide $\alpha\beta$, let $\ell$ be the smallest positive integer for which $p$ divides $t_\ell$ and let $n$ be a positive integer. If $\ell$ does not divide $n$, then
\begin{align*}
|t_n|_p=1.
\end{align*}
If $n= \ell k$ for some positive integer $k$, we have, for $p > 2$,
\begin{align*}
\abs{t_n}_p &= \abs{t_\ell}_p \abs{k}_p,
\intertext{while for $p=2$,}
\abs{t_n}_2 &= \begin{cases}\abs{t_\ell}_2 & \text{ for } k \text{ odd }\\ 2\abs{t_{2\ell}}_2 \abs{k}_2 & \text{ for } k \text{ even. }\end{cases}
\end{align*}
\end{lem}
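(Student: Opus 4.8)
The plan is to reduce the claim to facts about the rank of apparition together with a lifting‑the‑exponent analysis, carried out inside the ring of integers $\mathcal{O}$ of $\mathbb{Q}(\alpha,\beta)$ (when $\alpha,\beta$ are rational, $\mathcal{O}=\mathbb{Z}$ and everything specializes accordingly). Since $\alpha\beta$ is a nonzero integer not divisible by $p$, both $\alpha$ and $\beta$ are units at every prime of $\mathcal{O}$ above $p$. For any Lucas sequence one has $t_m\mid t_{mn}$ for all $m,n\ge 1$ (the quotient being a symmetric integer polynomial in $\alpha+\beta$ and $\alpha\beta$), and, because here $\alpha+\beta$ and $\alpha\beta$ are coprime, the classical divisibility theory of Lucas sequences gives $p\mid t_n$ if and only if $\ell\mid n$. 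In particular, if $\ell\nmid n$ then $p\nmid t_n$, i.e. $\abs{t_n}_p=1$, which is the first assertion. Assume henceforth $n=\ell k$. From $\alpha^{\ell k}-\beta^{\ell k}=(\alpha^\ell-\beta^\ell)\sum_{j=0}^{k-1}\alpha^{\ell j}\beta^{\ell(k-1-j)}$ one gets $t_{\ell k}=t_\ell\,T_k$, where $T_k:=(\gamma^k-\delta^k)/(\gamma-\delta)$ with $\gamma:=\alpha^\ell$, $\delta:=\beta^\ell$; since $t_\ell\mid t_{\ell k}$, the quotient $T_k=t_{\ell k}/t_\ell$ lies in $\mathbb{Z}$.

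Fix a prime $\mathfrak{p}$ of $\mathcal{O}$ above $p$, with ramification index $e$, so that $\ord_{\mathfrak p}$ restricted to $\mathbb{Q}$ equals $e\,\ord_p$. Because $\gamma-\delta=(\alpha-\beta)t_\ell$ and $p\mid t_\ell$, we have $\ord_{\mathfrak p}(\gamma-\delta)\ge\ord_{\mathfrak p}(t_\ell)\ge e$, so $\gamma\equiv\delta\pmod{\mathfrak p}$; reducing $T_k=\sum_{j=0}^{k-1}\gamma^j\delta^{k-1-j}$ modulo $\mathfrak p$ yields $T_k\equiv k\gamma^{k-1}\pmod{\mathfrak p}$. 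Since $\gamma$ is a $\mathfrak p$‑adic unit and $T_k\in\mathbb{Z}$, this shows $p\mid T_k$ exactly when $p\mid k$; hence, for $p\nmid k$, $\ord_pT_k=0$ and $\abs{t_n}_p=\abs{t_\ell}_p$. For $p$ odd this is the assertion in the case $\abs{k}_p=1$, and for $p=2$ it handles the case $k$ odd.

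For $p$ odd I would establish $\ord_pT_k=\ord_pk$ by induction on $\ord_pk$, the case $\ord_pk=0$ being done. Writing $k=pc$ and $\eta:=(\gamma/\delta)^c-1$, one has $\ord_{\mathfrak p}(\eta)=\ord_{\mathfrak p}(\gamma^c-\delta^c)\ge\ord_{\mathfrak p}(\gamma-\delta)\ge e$ and
\[
\frac{T_{pc}}{T_c}=\frac{\gamma^{pc}-\delta^{pc}}{\gamma^c-\delta^c}=\delta^{c(p-1)}\Bigl(p+\binom{p}{2}\eta+\dots+\binom{p}{p-1}\eta^{p-2}+\eta^{p-1}\Bigr).
\]
On the right the term $p$ has $\mathfrak p$‑valuation exactly $e$; each term $\binom{p}{j}\eta^{j-1}$ with $2\le j\le p-1$ has valuation $\ge e+\ord_{\mathfrak p}(\eta)\ge 2e$; and $\eta^{p-1}$ has valuation $\ge(p-1)e\ge 2e$ because $p\ge 3$. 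So $p$ is the unique term of minimal valuation, giving $\ord_{\mathfrak p}(T_{pc}/T_c)=e$, i.e. $\ord_p(T_{pc}/T_c)=1$ (the quotient lies in $\mathbb{Z}$ since $T_{pc}/T_c=t_{\ell pc}/t_{\ell c}$ and $t_{\ell c}\mid t_{\ell pc}$). Iterating, $\ord_pT_k=\ord_pk$, hence $\ord_pt_{\ell k}=\ord_pt_\ell+\ord_pk$, i.e. $\abs{t_n}_p=\abs{t_\ell}_p\abs{k}_p$.

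The case $p=2$, $k$ even, is the main obstacle: in the displayed identity $\binom{2}{1}=2$ and $\eta$ have comparable $\mathfrak q$‑valuations, so no term dominates. I would get around this by passing through $t_{2\ell}$. From the identity $(\alpha^\ell+\beta^\ell)^2=(\alpha-\beta)^2t_\ell^2+4(\alpha\beta)^\ell$ together with $2\mid t_\ell$ one sees that $2\mid v_\ell$, where $v_\ell:=\alpha^\ell+\beta^\ell\in\mathbb{Z}$, and $t_{2\ell}=t_\ell v_\ell$. Writing $k=2m$ and $g:=\alpha^{2\ell}$, $h:=\beta^{2\ell}$, one has $t_{\ell k}=t_{2\ell}\,U_m$ with $U_m:=(g^m-h^m)/(g-h)\in\mathbb{Z}$. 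For a prime $\mathfrak{q}$ of $\mathcal{O}$ above $2$ with ramification index $e_2$, the crucial improvement is that $g-h=(\alpha-\beta)t_\ell v_\ell$ with both $t_\ell$ and $v_\ell$ even, so $\ord_{\mathfrak q}(g-h)\ge 2e_2$ — double the earlier bound. Consequently, for every $m'$,
\[
\frac{U_{2m'}}{U_{m'}}=g^{m'}+h^{m'}=2h^{m'}+(g^{m'}-h^{m'}),
\]
where the first summand has $\mathfrak q$‑valuation $e_2$ and the second valuation $\ge 2e_2$, so $\ord_{\mathfrak q}(U_{2m'}/U_{m'})=e_2$, i.e. $\ord_2(U_{2m'}/U_{m'})=1$; since moreover $\ord_2U_{m'}=0$ for $m'$ odd (again $U_{m'}\equiv m'\,g^{m'-1}\pmod{\mathfrak q}$), induction gives $\ord_2U_m=\ord_2m$. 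Hence $\ord_2t_n=\ord_2t_{2\ell}+\ord_2m=\ord_2t_{2\ell}+\ord_2k-1$, which is exactly $\abs{t_n}_2=2\,\abs{t_{2\ell}}_2\,\abs{k}_2$. Beyond the classical Lucas identities and routine checks of integrality and of passing between $\ord_{\mathfrak p}$ and $\ord_p$, the one genuinely delicate step is this reduction to $t_{2\ell}$, which is what makes lifting‑the‑exponent go through at the prime $2$.
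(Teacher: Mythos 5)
Your proof is correct, but it is not the paper's route: in the paper this lemma is disposed of by citation alone, namely to Lemma~8 of \cite{39} (the statement is the classical ``law of repetition'' for Lucas sequences, going back to Lucas and Carmichael), so no argument is given there at all. What you supply is a self-contained lifting-the-exponent proof, and the details check out: the cofactor $T_k=t_{\ell k}/t_\ell$ is a rational integer with $T_k\equiv k\gamma^{k-1}\pmod{\mathfrak p}$, the binomial expansion gives $\ord_{\mathfrak p}(T_{pc}/T_c)=e$ exactly for odd $p$ because the terms $\binom{p}{j}\eta^{j-1}$ ($j\ge 2$) and $\eta^{p-1}$ have valuation at least $2e$, and — the genuinely delicate point, which you isolate correctly — at $p=2$ the passage through $t_{2\ell}$ works because $2\mid t_\ell$ forces $2\mid v_\ell=\alpha^\ell+\beta^\ell$ via $v_\ell^2=(\alpha-\beta)^2t_\ell^2+4(\alpha\beta)^\ell$, so $\ord_{\mathfrak q}(g-h)\ge 2e_2$ and the term $2h^{m'}$ dominates in $U_{2m'}/U_{m'}=g^{m'}+h^{m'}$; the bookkeeping $\ord_2 t_n=\ord_2 t_{2\ell}+\ord_2 k-1$ matches the stated $\abs{t_n}_2=2\abs{t_{2\ell}}_2\abs{k}_2$. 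The only place you lean on outside input is the first assertion ($\ell\nmid n\Rightarrow p\nmid t_n$), attributed to ``classical divisibility theory''; that is legitimate, and in fact it follows in two lines from your own setup: if $\mathfrak p\nmid\alpha-\beta$ then $p\mid t_n$ iff $(\alpha/\beta)^n\equiv 1\pmod{\mathfrak p}$, so the admissible $n$ are exactly the multiples of the multiplicative order, which is $\ell$; if $\mathfrak p\mid\alpha-\beta$ then $t_n\equiv n\alpha^{n-1}\pmod{\mathfrak p}$ and $\ell=p$. In short, your route buys a self-contained verification (and incidentally shows the coprimality hypothesis is scarcely needed once $p\nmid\alpha\beta$), whereas the paper's citation keeps the exposition short by outsourcing exactly the computations you carried out.
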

\begin{proof}
This is Lemma 8 of \cite{39} .
\end{proof}

\begin{lem}\label{lem8}
Let $\left\{t_n\right\}_{n=0}^{\infty}$ be a Lucas sequence, as in \eqref{eq10}, with $\alpha +\beta$ and $\alpha\beta$ coprime and $|\alpha| \geq |\beta|$. Let $n$ be an integer larger than $1$. There exists a positive number $C$, which is effectively computable in terms of $\alpha$ and $\beta$, such that if $p$ is a prime number larger than $C$ then
$$
\ord_pt_n<p\exp(-\log p/51.9\log\log p)\log |\alpha|\log n.
$$

\end{lem}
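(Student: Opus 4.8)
The plan is to reduce, by means of Lemma~\ref{prop:2}, to an estimate for $\ord_p t_\ell$, where $\ell$ denotes the rank of apparition of $p$ in the sequence, and then to bound $\ord_p t_\ell$ by a $\mathfrak p$-adic linear forms in logarithms argument of the kind that already underlies Lemma~\ref{lem3}. First I would enlarge $C$ finitely often so that, for $p>C$, one has $p\nmid\alpha\beta$ and $p\nmid(\alpha-\beta)^2$; then $p$ is unramified in $K=\mathbb Q(\alpha)$, a field of degree $1$ or $2$, any prime $\mathfrak p$ of $K$ above $p$ divides none of $\alpha,\beta,\alpha-\beta$, and $\ord_p$ and $\ord_{\mathfrak p}$ agree on $\mathbb Q$. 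By Lemma~\ref{lem4} the rank of apparition satisfies $\ell\leq p+1$; since $t_1=1$ and $\abs{t_2}=\abs{\alpha+\beta}<p$ (having taken $C\geq\abs{\alpha+\beta}$, and $\alpha+\beta\neq0$ for a non-degenerate Lucas sequence) we also have $\ell\geq3$. If $\ell\nmid n$, then $\abs{t_n}_p=1$ by Lemma~\ref{prop:2}, so $\ord_p t_n=0$, which is strictly smaller than the right-hand side of the asserted inequality --- that quantity being positive since $n\geq2$ and, as is easily checked, $\abs{\alpha}>1$ for every non-degenerate Lucas sequence (indeed $\abs{\alpha}^2\geq\abs{\alpha\beta}\geq1$, and equality would force $\alpha/\beta$ to be a root of unity). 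So I may assume $n=\ell k$ with $k\geq1$, whence $n\geq\ell\geq3$.

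Since $p>2$, Lemma~\ref{prop:2} gives $\ord_p t_n=\ord_p t_\ell+\ord_p k$. Here $\ord_p k\leq\log k/\log p\leq\log n/\log p$, which, as $\log\abs{\alpha}\geq\tfrac12\log2$, is far below the target bound once $p$ is large and is therefore harmless. For the main term, since $\mathfrak p$ divides neither $\beta$ nor $\alpha-\beta$ and $t_\ell$ is a rational integer, $\ord_p t_\ell=\ord_{\mathfrak p}t_\ell=\ord_{\mathfrak p}(\alpha^\ell-\beta^\ell)=\ord_{\mathfrak p}(\gamma^\ell-1)$, where $\gamma=\alpha/\beta$. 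As the sequence is non-degenerate, $\gamma$ is not a root of unity, so $\gamma^\ell-1\neq0$ and Yu's estimate \cite{Yu} for $\mathfrak p$-adic linear forms in logarithms applies to $\gamma^\ell-1$; because the coefficient ratio $a/b$ of a Lucas sequence equals $-1$, a root of unity, this is essentially the one-logarithm form $\ell\log\gamma$. The height occurring in Yu's bound is $h(\gamma)=\log\abs{\alpha}$: the primitive minimal polynomial of $\gamma$ is $sx^2+(r^2+2s)x+s$, which is primitive because $\gcd(s,r^2+2s)=\gcd(s,r^2)=1$ by the coprimality of $\alpha+\beta$ and $\alpha\beta$, and has Mahler measure $\abs{s}\max(1,\abs{\alpha/\beta})\max(1,\abs{\beta/\alpha})=\abs{s}\,\abs{\alpha}/\abs{\beta}=\abs{\alpha}^2$, so $h(\gamma)=\tfrac12\log\abs{\alpha}^2=\log\abs{\alpha}$ (the case $\deg\gamma=1$, i.e.\ $\alpha,\beta\in\mathbb Z$, is handled identically and again gives $\log\abs{\alpha}$). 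Feeding $\ell\leq p+1$ into Yu's bound, so that $\log\ell\leq\log(p+1)$, and calibrating the constants exactly as in the proof of Lemma~7 of \cite{St}, one obtains $\ord_p t_\ell$ at most a fixed constant times $\tfrac{p}{\log p}\exp(-\log p/51.9\log\log p)\log\abs{\alpha}\,\log\ell$; the factor $1/\log p$ then absorbs that constant for $p$ large, giving $\ord_p t_\ell<\tfrac12\,p\exp(-\log p/51.9\log\log p)\log\abs{\alpha}\,\log\ell\leq\tfrac12\,p\exp(-\log p/51.9\log\log p)\log\abs{\alpha}\,\log n$ since $\ell\leq n$. Combining with the bound for $\ord_p k$ completes the proof.

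The genuinely delicate step is this last one, and it is exactly the step met in \cite{St} and \cite{41}: extracting from Yu's theorem an upper bound for $\ord_{\mathfrak p}(\gamma^\ell-1)$ of the precise shape $p\exp(-\log p/51.9\log\log p)$ in $p$, the constant $51.9$ being, as noted in the introduction, an artefact of the bounds in \cite{Yu} with no arithmetic meaning. Since in practice this is the very estimate behind Lemma~\ref{lem3}, now in the case $a/b=-1$, I would in fact deduce Lemma~\ref{lem8} directly from the proof of Lemma~7 of \cite{St}, the additional factor $\log\abs{\alpha}$ appearing as the height $h(\alpha/\beta)$ computed above, with Lemmas~\ref{prop:2} and~\ref{lem4} supplying the passage from $t_n$ to $t_\ell$.
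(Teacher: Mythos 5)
Your argument is sound in outline, but it is not the paper's proof: the paper disposes of this lemma in two lines, by taking $C$ larger than $|\alpha\beta|$ and the absolute value of the discriminant of $\mathbb Q(\alpha/\beta)$ (so that any $p>C$ divides neither $\alpha\beta$ nor the discriminant, hence is unramified) and then quoting Lemma 4.3 of \cite{41}, which is a statement of exactly the required shape for Lucas (and Lehmer) numbers. What you have done instead is reconstruct, from ingredients already in the paper (Lemmas \ref{lem4} and \ref{prop:2}) together with Yu's $p$-adic estimates, essentially the argument that lies behind that citation: reduction to the rank of apparition $\ell$, the relation $\ord_p t_n=\ord_p t_\ell+\ord_p k$ for $p>2$, the trivial bound $\ord_p k\le\log n/\log p$, the identification $\ord_p t_\ell=\ord_{\mathfrak p}\bigl((\alpha/\beta)^\ell-1\bigr)$ for an unramified $\mathfrak p$ dividing none of $\alpha$, $\beta$, $\alpha-\beta$, and the height computation $h(\alpha/\beta)=\log|\alpha|$ (your check that $sx^2+(r^2+2s)x+s$ is primitive via $(r,s)=1$ and has Mahler measure $|\alpha|^2$ is correct, and it does account for the factor $\log|\alpha|$). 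Your route buys a more self-contained and transparent proof; the paper's buys brevity and avoids re-deriving a delicate numerical calibration. The one caveat is that the quantitative core --- that Yu's theorem applied to the single form coming from $(\alpha/\beta)^\ell-1$ produces precisely the factor $p\exp(-\log p/51.9\log\log p)$, and in particular your claimed spare factor $1/\log p$ used to absorb the implied constant --- is not carried out but imported from the proofs in \cite{St} and \cite{41}; that is no less rigorous than the paper's own citation, but it means your proof, like the paper's, ultimately rests on the calibration in \cite{41} rather than replacing it (and if the spare $1/\log p$ is not actually available there, the constant can equally be absorbed by slack in the exponent constant, in the same way the paper passes from $103.95$ to $104$). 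A final minor remark: the detour through $\ell$ is not strictly necessary, since Yu's bound may be applied directly to $\ord_{\mathfrak p}\bigl((\alpha/\beta)^n-1\bigr)$ with parameter $\log n$, though your version costs nothing and uses only lemmas already stated in the paper.
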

\begin{proof}
We may suppose that $C$ exceeds $|\alpha\beta|$ and the absolute value of the discriminant of $\mathbb Q(\alpha/\beta)$. The result then follows from Lemma 4.3 of \cite{41}.
\end{proof}

\section{Cyclotomic polynomials at algebraic points in quadratic cyclotomic extensions}

Let $\theta_1$ and $\theta_2$ be non-zero algebraic integers in $\mathbb Q(\zeta_e)$ with $e$ equal to $3,4$ or $6$ and suppose that $\theta_1/\theta_2$ is not a root of unity and that $\theta_1=\overline\theta_2$. Then $\theta_1$ and $\theta_2$ are algebraic conjugates. Put
$$
g=((\theta_1+\theta_2)^2, \theta_1\theta_2),
$$
and
$$
\lambda_1= \theta_1/\sqrt g     ,      \lambda_2= \theta_2/\sqrt g. 
$$
Note that
$$
(x-\lambda_1)(x+\lambda_1)(x-\lambda_2)(x+\lambda_2) = x^4-((\theta_1+\theta_2)^2/g-2\theta_1\theta_2/g)x^2-(\theta_1\theta_2/g)^2
$$
is a polynomial with integer coefficients and thus $\lambda_1$ and $\lambda_2$ are algebraic integers. Further $\lambda_1$ is of degree $2$ over $\mathbb Q$ with conjugate $\lambda_2$ when $g$ is a perfect square and is of degree $4$ over $\mathbb Q$ with conjugates $\lambda_1,-\lambda_1,\lambda_2,-\lambda_2$ when $g$ is not a perfect square.
Since $\theta_1/\theta_2=\lambda_1/\lambda_2$ is not a root of unity we see that $\lambda_1$ is not a root of unity. In both cases the conjugates of $\lambda_1$ have the same absolute value as $\lambda_1$ and, since $\lambda_1$ is not a root of unity,
\begin{equation} \label{eq20}
|\lambda_1|\geq 2^{1/4},
\end{equation}
as is readily checked. Furthermore, since $\theta_1=\overline\theta_2$ we find that $\overline{\lambda_1/\lambda_2} =\lambda_2/\lambda_1$ and as $\lambda_1/\lambda_2$ is not a root of unity it is an algebraic number of degree at least $2$. In fact it has conjugate $\lambda_2/\lambda_1$ and minimal polynomial
$$
\lambda_1\lambda_2x^2-(\lambda_1^2+\lambda_2^2)x-\lambda_1\lambda_2.
$$
For any algebraic number $\alpha$ let $M(\alpha)$ denote the Mahler measure of $\alpha$, see \cite{3a}. We then have
\begin{equation} \label{eq21}
M(\lambda_1/\lambda_2)=M(\lambda_2/\lambda_1)=|\lambda_1\lambda_2|\max(1,|\lambda_1/\lambda_2|)\max(1,|\lambda_2/\lambda_1|)=|\lambda_1|^2.
\end{equation}

\begin{lem}\label{lem9}
Let $n$ be a positive integer and $\zeta$ an $e$-th root of unity with $e$ equal to $3,4$ or $6$. There exists an effectively computable positive number $c_1$ such that

$$
n\log|\lambda_1|-c_1\log(n+1)\log|\lambda_1| \leq \log|\lambda_1^n-\zeta\lambda_2^n| \leq n\log|\lambda_1| + \log 2.
$$

\end{lem}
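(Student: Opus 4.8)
The upper bound is immediate. Since $\lambda_1$ and $\lambda_2$ are algebraic conjugates we have $|\lambda_1|=|\lambda_2|$, and $|\zeta|=1$, so $|\lambda_1^n-\zeta\lambda_2^n|\le|\lambda_1|^n+|\lambda_2|^n=2|\lambda_1|^n$, and taking logarithms gives the right-hand inequality. For the left-hand inequality put $\gamma=\lambda_2/\lambda_1$. Then $|\gamma|=1$, $\gamma$ is not a root of unity, and by the description of the minimal polynomial of $\lambda_1/\lambda_2$ above together with \eqref{eq21}, $\gamma$ is algebraic of degree $2$ with Mahler measure $|\lambda_1|^2$, hence of Weil height $h(\gamma)=\tfrac12\log|\lambda_1|^2=\log|\lambda_1|$. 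Note $\zeta\gamma^n\neq1$, since otherwise $\gamma^n=\zeta^{-1}$ would be a root of unity; hence $\lambda_1^n-\zeta\lambda_2^n=\lambda_1^n(1-\zeta\gamma^n)\neq0$ and
\[
\log|\lambda_1^n-\zeta\lambda_2^n|=n\log|\lambda_1|+\log|1-\zeta\gamma^n| .
\]
It therefore suffices to prove $\log|1-\zeta\gamma^n|\ge -c_1\log(n+1)\log|\lambda_1|$ for an effectively computable absolute $c_1$.

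To bound $|1-\zeta\gamma^n|$ from below I first eliminate $\zeta$: since $|\zeta\gamma^n|=1$ and $\zeta^e=1$, writing $w=\zeta\gamma^n$ we get $|1-\gamma^{ne}|=|1-w^e|=|1-w|\,|1+w+\cdots+w^{e-1}|\le e\,|1-\zeta\gamma^n|$. Next, $\gamma^{ne}$ lies on the unit circle and is neither $1$ nor $-1$ (each would force $\gamma$ to be a root of unity), so its principal logarithm $\mathrm{Log}(\gamma^{ne})$ is defined and nonzero, and the routine inequality $|1-e^{i\phi}|=2|\sin(\phi/2)|\ge\frac{2}{\pi}|\phi|$ for $|\phi|\le\pi$ yields $|1-\gamma^{ne}|\ge\frac{2}{\pi}|\mathrm{Log}(\gamma^{ne})|$. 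Writing $\gamma=e^{i\theta}$ with $\theta\in(-\pi,\pi]$ and $ne\,\theta=2\pi M+\psi$ with $M\in\mathbb Z$, $\psi\in(-\pi,\pi)$, we have $|M|\le(ne+1)/2$ and
\[
\mathrm{Log}(\gamma^{ne})=i\psi=ne\log\gamma-2M\log(-1),
\]
with the determinations $\log\gamma=i\theta$ and $\log(-1)=i\pi$. Thus $\mathrm{Log}(\gamma^{ne})$ is a nonzero linear form in the logarithms of the algebraic numbers $\gamma$ and $-1$, with rational integer coefficients of absolute value at most $ne+1$.

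Now I would apply an effective lower bound for linear forms in two logarithms (Baker's theorem, in the explicit form of Matveev). Here $\mathbb Q(\gamma,-1)=\mathbb Q(\gamma)$ has degree $2$, $h(-1)=0$, $|\log(-1)|=\pi$, $|\log\gamma|\le\pi$, and $h(\gamma)=\log|\lambda_1|$; moreover $|\lambda_1|\ge2^{1/4}$ by \eqref{eq20}, so $\log|\lambda_1|\ge\tfrac14\log2$. Inspecting the shape of Matveev's bound (the relevant product of the $A_j$'s being an absolute multiple of $\max(1,\log|\lambda_1|)\le\frac{4}{\log2}\log|\lambda_1|$, and the coefficient dependence being $\log(ne+1)$ to the first power) gives an effectively computable absolute $\kappa$ with
\[
\log|\mathrm{Log}(\gamma^{ne})|\ge-\kappa\,\log|\lambda_1|\,\log(ne+1).
\]
Combining with the two displays above and absorbing the bounded additive terms $\log(e)+\log(\pi/2)$ into $\log(n+1)\log|\lambda_1|$ (using $\log(n+1)\ge\log2$ and $\log|\lambda_1|\ge\tfrac14\log2$, and $\log(ne+1)\le C\log(n+1)$) gives $\log|1-\zeta\gamma^n|\ge-c_1\log(n+1)\log|\lambda_1|$ with $c_1$ effectively computable and absolute, which completes the proof.

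The main obstacle is this last step: one needs a linear forms in logarithms estimate in which the dependence on the coefficient size is $\log(ne+1)$ to the \emph{first} power — so one must invoke a Baker–Matveev type bound rather than the sharper two-logarithm estimates of Laurent–Mignotte–Nesterenko type, whose coefficient dependence is $(\log B)^2$ — and one must verify, from the explicit form of the bound together with $|\lambda_1|\ge2^{1/4}$, that the resulting exponent is an \emph{absolute} multiple of $h(\gamma)=\log|\lambda_1|$; this is precisely what lets $c_1$ be absolute rather than depend on $\gamma$. The reduction from $|1-\zeta\gamma^n|$ to the linear form and the choice-of-branch bookkeeping are routine.
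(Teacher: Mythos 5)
Correct, and essentially the paper's argument: both proofs get the upper bound from $|\lambda_2/\lambda_1|=1$ and reduce the lower bound to a nonvanishing linear form in two logarithms --- one of $\gamma=\lambda_2/\lambda_1$ and one of a root of unity --- with integer coefficients of size $O(n)$, then apply an effective two-logarithm estimate, using $M(\gamma)=|\lambda_1|^2$ together with $|\lambda_1|\ge 2^{1/4}$ to make the final constant an absolute multiple of $\log|\lambda_1|$. Your only deviations are cosmetic (you eliminate $\zeta$ by raising to the $e$-th power and use $|1-e^{i\phi}|\ge \tfrac{2}{\pi}|\phi|$, whereas the paper keeps $\zeta$, invokes Baker's inequality relating $|e^{z}-1|$ to $|z-ib\pi|$, and rewrites $\log\zeta-b_0\pi i$ as $b_1\log\zeta_{12}$), and your point that one should quote a bound with first-power dependence on $\log B$ (Baker--W\"ustholz or Matveev) is sound and consistent with the paper, which cites these alongside Laurent--Mignotte--Nesterenko.
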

\begin{proof}
Note that
$$
\log|\lambda_1^n-\zeta \lambda_2^n|=n\log|\lambda_1|+\log|\zeta(\lambda_2/\lambda_1)^n-1|
$$
Since $\theta_1=\overline\theta_2$ we see that $|\lambda_2/\lambda_1|=1$ and so $|\zeta(\lambda_2/\lambda_1)^n-1| \leq 2$. It remains to establish a lower bound for $|\zeta(\lambda_2/\lambda_1)^n-1|$. For any complex number $z$, either $1/4\leq|e^{z}-1|$ or
$$
|z-ib\pi| \leq 4|e^{z}-1|
$$
for some integer $b$, see page 176 of \cite{5}. Let $z=\log\zeta + n\log \lambda_2/\lambda_1$ where we take the principal value of the logarithms. Then either
\begin{equation} \label{eq22}
|\zeta(\lambda_2/\lambda_1)^n-1| \geq 1/4
\end{equation}
or
$$
4|\zeta(\lambda_2/\lambda_1)^n-1| \geq \min_{b\in \mathbb Z}|\log \zeta +n\log (\lambda_2/\lambda_1)-b\pi i|.
$$
Suppose that the minimum occurs at $b_0$. Then $|b_0| \leq n+1.$ Further
$$
\log \zeta - b_0\pi i = b_1\log \zeta_{12}
$$
with $|b_1| \leq 6(|b_0|+1) \leq 6n+12$ and thus if \eqref{eq22} does not hold then
\begin{equation} \label{eq23}
4|\zeta(\lambda_2/\lambda_1)^n-1| \geq |n\log (\lambda_2/\lambda_1) + b_1\log \zeta_{12}|.
\end{equation}
Let $c_1,c_2,\dots$ denote effectively computable positive numbers. This is a linear form in two logarithms and by \cite{LMN}, \cite{BW} or \cite{20} we see from \eqref{eq22} and \eqref{eq23}, since $\lambda_2/\lambda_1$ is not a root of unity, that
\begin{equation} \label{eq24}
\log |\zeta(\lambda_2/\lambda_1)^n-1| > -c_2\log(n+1)\log\max(4,A)
\end{equation}
where $A$ is the Mahler measure of $\lambda_2/\lambda_1.$
Thus, by \eqref{eq20} and \eqref{eq21},
$$
\max(4,A) \leq |\lambda_1|^{c_3}
$$
hence, from \eqref{eq24},
$$
\log |\zeta(\lambda_2/\lambda_1)^n-1| > -c_4\log(n+1)\log|\lambda_1|
$$
and our result follows.

\end{proof}
For any positive integer $n$ let $\omega(n)$ denote the number of distinct prime factors of $n$ and put $q(n)=2^{\omega(n)}$.

\begin{lem}\label{lem10}
Let $e$ be $3,4$ or $6$ and let $i$ be an integer coprime with $e$. There exists an effectively computable positive number $c$ such that if $n>2$ then
$$
(\phi(ne)/\phi(e)-cq(n)\log n)\log |\lambda_1| \leq \log |\Phi_{n,e}^{(i)}(\lambda_1,\lambda_2)| \leq (\phi(ne)/\phi(e)+cq(n)\log n)\log |\lambda_1|.
$$

\end{lem}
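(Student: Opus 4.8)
The plan is to take logarithms of absolute values in the factorization \eqref{eq17} evaluated at $(\lambda_1,\lambda_2)$. Let $\mathcal D$ be the set of divisors $m$ of $n$ with $(m,e)=1$ and $\overline m m\equiv i\bmod e$; since $\mu$ vanishes on non-squarefree integers, $\mathcal D$ may be taken to consist of squarefree divisors only, so $\#\mathcal D\le 2^{\omega(n)}=q(n)$. From \eqref{eq17} we get
$$
\log\abs{\Phi_{n,e}^{(i)}(\lambda_1,\lambda_2)}=\sum_{m\in\mathcal D}\mu(m)\log\abs{\lambda_1^{n/m}-\zeta_e^{\overline m}\lambda_2^{n/m}}.
$$
Each $\zeta_e^{\overline m}$ is an $e$-th root of unity with $e\in\{3,4,6\}$, so Lemma \ref{lem9}, applied with $n$ replaced by the positive integer $n/m$, gives
$$
\tfrac nm\log\abs{\lambda_1}-c_1\log\bigl(\tfrac nm+1\bigr)\log\abs{\lambda_1}\le\log\abs{\lambda_1^{n/m}-\zeta_e^{\overline m}\lambda_2^{n/m}}\le\tfrac nm\log\abs{\lambda_1}+\log 2 .
$$

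Next I would split off the main term. Put $E_m=\log\abs{\lambda_1^{n/m}-\zeta_e^{\overline m}\lambda_2^{n/m}}-\tfrac nm\log\abs{\lambda_1}$, so that $\abs{E_m}\le c_1\log(n/m+1)\log\abs{\lambda_1}+\log 2$ by the displayed bound from Lemma \ref{lem9}. Summing over $m\in\mathcal D$, the leading contribution is $\bigl(\sum_{m\in\mathcal D}\mu(m)(n/m)\bigr)\log\abs{\lambda_1}$, and comparing the degree of the right-hand side of \eqref{eq17} with the degree $\phi(ne)/\phi(e)$ of $\Phi_{n,e}^{(i)}$ recorded after \eqref{eq15} shows that $\sum_{m\in\mathcal D}\mu(m)(n/m)=\phi(ne)/\phi(e)$. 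Hence
$$
\log\abs{\Phi_{n,e}^{(i)}(\lambda_1,\lambda_2)}=\frac{\phi(ne)}{\phi(e)}\log\abs{\lambda_1}+R,\qquad R=\sum_{m\in\mathcal D}\mu(m)E_m .
$$

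Finally I would estimate $R$. For $n>2$ one has $\log n\ge\log 3>1$ and $\log(n/m+1)\le\log(n+1)\le c_2\log n$, while \eqref{eq20} gives $\abs{\lambda_1}\ge 2^{1/4}$, so $\log 2\le 4\log\abs{\lambda_1}\le c_3\log n\log\abs{\lambda_1}$. Combining these with $\#\mathcal D\le q(n)$ yields $\abs R\le c\,q(n)\log n\log\abs{\lambda_1}$ for a suitable effectively computable $c$, and therefore
$$
\Bigl|\log\abs{\Phi_{n,e}^{(i)}(\lambda_1,\lambda_2)}-\frac{\phi(ne)}{\phi(e)}\log\abs{\lambda_1}\Bigr|\le c\,q(n)\log n\log\abs{\lambda_1},
$$
which is equivalent to the two asserted inequalities. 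The argument is little more than bookkeeping on top of Lemma \ref{lem9}; the one place to be slightly careful is the identification of the main coefficient with $\phi(ne)/\phi(e)$, but this is immediate from the degree count in \eqref{eq17}, so I do not expect a genuine obstacle here—only the routine chase of the constant $c$ through the estimates.
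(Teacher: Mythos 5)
Your proposal is correct and follows essentially the same route as the paper: take logarithms in \eqref{eq17}, apply Lemma \ref{lem9} to each factor, identify the main coefficient with the degree $\phi(ne)/\phi(e)$, and bound the number of squarefree divisors by $q(n)$. The only cosmetic difference is that you justify the main-term identity by comparing homogeneous degrees in \eqref{eq17}, whereas the paper uses the corresponding M\"obius-sum identity directly; both are immediate.
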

\begin{proof}
By \eqref{eq17}
$$
\log |\Phi_{n,e}^{(i)}(\lambda_1,\lambda_2)|=\sum_{\substack{m|n \\ (m,e)=1\\ \overline mm\equiv i \bmod e}}\mu(m)\log |\lambda_1^{n/m}-\zeta^{\overline m}_{e}\lambda_2^{n/m}|
$$
and so, by Lemma \ref{lem9},
$$
|\log |\Phi_{n,e}^{(i)}(\lambda_1,\lambda_2)|-\sum_{\substack{m|n \\ (m,e)=1}}\mu(m)(n/m)\log |\lambda_1|| \leq \sum_{\substack{m|n \\ (m,e)=1\\ \mu(m) \neq 0}}c_1\log (n+1)\log |\lambda_1|.
$$ 
The result now follows.
\end{proof}

\begin{lem}\label{lem11}
Let $e$ be $3,4$ or $6$ and let $i$ be an integer coprime with $e$. There exists an effectively computable positive number $C$ such that if $n$ exceeds $C$ then
$$
\log |\Phi_{n,e}^{(i)}(\lambda_1,\lambda_2)| > (\phi(ne)/2\phi(e))\log |\lambda_1|.
$$
\end{lem}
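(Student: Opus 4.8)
The plan is to feed the lower bound of Lemma~\ref{lem10} into an elementary comparison of $\phi(ne)$ against $q(n)\log n$. By Lemma~\ref{lem10} there is an effectively computable $c$ such that for $n>2$
\[
\log |\Phi_{n,e}^{(i)}(\lambda_1,\lambda_2)| \geq \bigl(\phi(ne)/\phi(e)-cq(n)\log n\bigr)\log |\lambda_1|.
\]
Since $|\lambda_1|\geq 2^{1/4}$ by \eqref{eq20}, we have $\log|\lambda_1|>0$, so it suffices to produce an effectively computable $C$ such that, for $n>C$,
\[
\phi(ne)/\phi(e)-cq(n)\log n > \phi(ne)/2\phi(e),
\]
i.e.\ $\phi(ne) > 2\phi(e)\,c\,q(n)\log n$.

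For the left-hand side, since $n\mid ne$ we have $\phi(ne)\geq\phi(n)$, and by the classical effective lower bound for Euler's function there is an absolute effectively computable $c_5>0$ with $\phi(n)>c_5 n/\log\log n$ for $n\geq 3$. For the right-hand side, recall $q(n)=2^{\omega(n)}$; since there is an effectively computable $c_6$ with $\omega(n)<c_6\log n/\log\log n$ for $n\geq 3$, we get $q(n)\log n<\exp\bigl((c_6\log 2)\log n/\log\log n\bigr)\log n$. Hence
\[
\frac{\phi(ne)}{2\phi(e)\,c\,q(n)\log n} > \frac{c_5\, n}{2\phi(e)\,c\,(\log\log n)\,\exp\bigl((c_6\log 2)\log n/\log\log n\bigr)\log n},
\]
and the right-hand side tends to infinity with $n$ because the denominator is $n^{o(1)}$. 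Therefore there is an effectively computable $C$, depending only on $\alpha$ and $\beta$ (through $c$, $\log|\lambda_1|$ and the absolute constants above), such that $\phi(ne)>2\phi(e)\,c\,q(n)\log n$ for all $n>C$, which gives the stated inequality.

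There is no genuine obstacle here: the content is the observation that the error term $q(n)\log n$ in Lemma~\ref{lem10} has size $n^{o(1)}$, which is negligible against the main term $\phi(ne)/\phi(e)\gg n/\log\log n$. The only point to keep track of is effectivity, which is automatic since the lower bound $\phi(n)\gg n/\log\log n$ and the upper bound $\omega(n)\ll \log n/\log\log n$ are both effective and $c$ together with $\log|\lambda_1|$ are effectively computable.
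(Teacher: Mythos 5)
Your proposal is correct and follows essentially the same route as the paper: it feeds the lower bound of Lemma~\ref{lem10} into the positivity of $\log|\lambda_1|$ from \eqref{eq20} and the effective elementary bounds $\phi(n)\gg n/\log\log n$ and $q(n)<n^{O(1)/\log\log n}$, which is exactly what the paper's (terser) proof does.
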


\begin{proof}
For $n$ sufficiently large
$$
\phi(n) > n/2\log \log n $$
and
$$
 q(n)< n^{1/\log \log n}
$$
and so by \eqref{eq20} the result follows from Lemma \ref{lem10}.

\end{proof}

\begin{lem}\label{lem12}
Let $e$ be $3,4$ or $6$ and let $p$ be a prime number. There exists a positive number $C$, which is effectively computable in terms of $a,b,\alpha$ and $\beta$, such that for $n>C$
$$
\ord_p\Phi_{ne}(\lambda_1,\lambda_2)<p\exp(-\log p/51.9\log\log p)\log |\lambda_1|\log ne.
$$
\end{lem}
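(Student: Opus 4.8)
I propose the following approach. First, $\Phi_{ne}(\lambda_1,\lambda_2)$ is a non-zero rational integer: for $ne\geq 3$ one checks from \eqref{eq13} that $\Phi_{ne}(y,x)=\Phi_{ne}(x,y)$ and $\Phi_{ne}(-x,-y)=\Phi_{ne}(x,y)$, so, since $\lambda_1,\lambda_2$ (together with $-\lambda_1,-\lambda_2$ when $g$ is not a perfect square) form a complete set of algebraic conjugates and are algebraic integers, $\Phi_{ne}(\lambda_1,\lambda_2)$ is fixed by every embedding of $\mathbb Q(\lambda_1)$ into $\mathbb C$; it is non-zero because $\lambda_1/\lambda_2$ is not a root of unity. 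Since $(\lambda_1+\lambda_2)^2$ and $\lambda_1\lambda_2$ are coprime non-zero integers and $\lambda_1/\lambda_2$ is not a root of unity, Lemma~\ref{lem1} applies once $ne>12$: apart from $P(ne/(3,ne))$, which occurs to at most the first power, every prime divisor $p$ of $\Phi_{ne}(\lambda_1,\lambda_2)$ satisfies $p\equiv\pm1\pmod{ne}$ and hence $p\geq ne-1$.

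Consequently, for every prime $p<ne-1$ we have $\ord_p\Phi_{ne}(\lambda_1,\lambda_2)\leq 1$, which for $n$ large is smaller than the right-hand side of the asserted inequality, since $p\exp(-\log p/51.9\log\log p)\geq 2$ for all primes $p$ and $\log|\lambda_1|\geq\tfrac14\log 2>0$ by \eqref{eq20}. It therefore remains to bound $\ord_p\Phi_{ne}(\lambda_1,\lambda_2)$ for primes $p\geq ne-1$. From the polynomial identity $x^{ne}-y^{ne}=\prod_{d\mid ne}\Phi_d(x,y)$ we see that $\Phi_{ne}(\lambda_1,\lambda_2)$ divides $\lambda_1^{ne}-\lambda_2^{ne}$ in the ring of integers $O$ of $\mathbb Q(\lambda_1)$. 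If $g$ is a perfect square, then $\lambda_1,\lambda_2$ are conjugate quadratic algebraic integers, $\lambda_1+\lambda_2\in\mathbb Z$ is coprime to $\lambda_1\lambda_2$, $|\lambda_1|\geq|\lambda_2|$, and $\lambda_1^{ne}-\lambda_2^{ne}=(\lambda_1-\lambda_2)t_{ne}$, where $\{t_m\}$ is the Lucas sequence with roots $\lambda_1,\lambda_2$. If $g$ is not a perfect square, then $(\lambda_1,\lambda_2)$ is a Lehmer pair and $\lambda_1^{ne}-\lambda_2^{ne}=(\lambda_1^{\delta}-\lambda_2^{\delta})P_{ne}$, with $\delta=1$ for $ne$ odd and $\delta=2$ for $ne$ even, where $\{P_m\}$ is the associated Lehmer sequence.

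Now choose $C$ so large, in terms of $\lambda_1$ and $\lambda_2$ (hence of $a,b,\alpha,\beta$), that every prime $p\geq ne-1$ is unramified in $\mathbb Q(\lambda_1)$, is coprime to $R(R-4S)$ where $R=(\lambda_1+\lambda_2)^2$ and $S=\lambda_1\lambda_2$ — so that $p\nmid\lambda_1-\lambda_2$ and $p\nmid\lambda_1^2-\lambda_2^2$ in $O$ — and exceeds the thresholds of Lemma~\ref{lem8} and of its counterpart for Lehmer numbers (both consequences of Lemma 4.3 of \cite{41}). Fixing a prime $\mathfrak p$ of $O$ above such a $p$, we obtain $\ord_p\Phi_{ne}(\lambda_1,\lambda_2)=\ord_{\mathfrak p}\Phi_{ne}(\lambda_1,\lambda_2)\leq\ord_{\mathfrak p}(\lambda_1^{ne}-\lambda_2^{ne})=\ord_p t_{ne}$ (resp. $\ord_p P_{ne}$), and Lemma~\ref{lem8}, resp. its Lehmer counterpart, then yields $\ord_p\Phi_{ne}(\lambda_1,\lambda_2)<p\exp(-\log p/51.9\log\log p)\log|\lambda_1|\log ne$, using that $|\lambda_1|=|\lambda_2|$ so that the growth rate of $t_m$ (resp. $P_m$) is $\log|\lambda_1|$ per step. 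Combining the two ranges of $p$ gives the Lemma.

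The point requiring care — and the reason the underlying estimate is taken from \cite{41}, on divisors of Lucas \emph{and Lehmer} numbers — is the case when $g$ is not a perfect square: one may not replace the Lehmer sequence of $(\lambda_1,\lambda_2)$ by the Lucas sequence of $\lambda_1^2,\lambda_2^2$, because the latter would contribute $\log|\lambda_1^2|=2\log|\lambda_1|$ and overshoot the required bound by a factor tending to $2$; the Lehmer normalisation must be kept, for then $|\lambda_1|$ enters only to the first power. The remaining work is the bookkeeping needed to pick $C$ so that $ne>12$, the right-hand side of the asserted inequality exceeds $1$, and every prime $p\geq ne-1$ simultaneously avoids the finitely many ramified primes and primes dividing $R(R-4S)$ and lies above the thresholds of the cited $p$-adic estimates.
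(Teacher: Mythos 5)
Your argument is sound, but note that the paper's own proof is a one\--line citation: the displays (5.3) and (5.4) of \cite{41} already bound $\ord_p\Phi_m(\alpha,\beta)$ in exactly this form for any pair $(\alpha,\beta)$ with $(\alpha+\beta)^2$ and $\alpha\beta$ coprime non-zero integers and $\alpha/\beta$ not a root of unity, and $(\lambda_1,\lambda_2)$ is such a pair, so the lemma is read off with $m=ne$. What you have done is reconstruct, from the material inside this paper, essentially the argument that \cite{41} carries out behind that citation: primes $p<ne-1$ are disposed of by the congruence structure of Lemma \ref{lem1} (order at most $1$, which is below the right-hand side for large $n$ by \eqref{eq20}, as you verify), and primes $p\geq ne-1$ are handled by passing from $\Phi_{ne}(\lambda_1,\lambda_2)$ to $\lambda_1^{ne}-\lambda_2^{ne}$ and then to the associated Lucas or Lehmer number, to which the $p$-adic order estimate applies once $C$ is taken large enough that such $p$ avoid ramified primes and divisors of $(\lambda_1+\lambda_2)^2(\lambda_1-\lambda_2)^2$. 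The only caveat is that in the case where $g$ is not a perfect square your proof needs a Lehmer analogue of Lemma \ref{lem8}, which is not stated in this paper; you are right that it follows from Lemma 4.3 of \cite{41} (which, like (5.3) and (5.4) there, is formulated for Lehmer as well as Lucas pairs), and your remark that keeping the Lehmer normalisation is what preserves the factor $\log|\lambda_1|$ rather than $2\log|\lambda_1|$ is exactly the right point. So there is no gap; your route simply re-derives from \cite{41} what the paper imports from \cite{41} in a single step, at the cost of importing the Lehmer-case order bound rather than the ready-made bound on $\ord_p\Phi_{ne}$.
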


\begin{proof}
This follows from (5.3) and (5.4) of \cite{41}.
\end{proof}

\begin{lem}\label{lem13}
Let $e$ be $3,4$ or $6$ and let $i$ be $1$ or $-1$. There exists a positive number $C$, which is effectively computable in terms of $\theta_1$ and $\theta_2$, such that if $m$ exceeds $C$ then there is an irreducible $\pi$ in $\mathbb Z[\zeta_e]$ which divides
$$
\theta_1^m-\zeta_e^i\theta_2^m
$$
in $\mathbb Z[\zeta_e]$ which is either a rational prime $p$ or is such that $\pi\overline\pi = p$ and, in both cases,
$$
p> m\exp(\log m/103.95\log \log m).
$$
\end{lem}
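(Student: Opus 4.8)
The plan is, for the given $i\in\{1,-1\}$, to attach to $\theta_1^{m}-\zeta_e^{i}\theta_2^{m}$ the rational integer $M_i:=\abs{\Phi_{m,e}^{(i)}(\lambda_1,\lambda_2)}^{2}$ and to prove that $M_i$ has a prime factor $p>me\exp(\log me/103.95\log\log me)$, from which the required $\pi$ will follow. To set this up: since $\theta_1=\overline\theta_2$ and $\theta_1/\theta_2$ is not a root of unity, $\theta_1+\theta_2$ is a non-zero rational integer and $\theta_1\theta_2$ a positive rational integer, so the quantities $g,\lambda_1,\lambda_2$ of the preceding section are defined, $(\lambda_1+\lambda_2)^{2}$ and $\lambda_1\lambda_2$ are coprime non-zero rational integers, and $\lambda_1/\lambda_2=\theta_1/\theta_2$ is not a root of unity. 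Write $d=\phi(me)/\phi(e)$. Then $\Phi_{m,e}^{(i)}(\theta_1,\theta_2)=(\sqrt g)^{d}\Phi_{m,e}^{(i)}(\lambda_1,\lambda_2)$ is a non-zero element of $\mathbb Z[\zeta_e]$, and $\Phi_{m,e}^{(i)}(\lambda_1,\lambda_2)$ is a non-zero algebraic integer; moreover $g^{d}M_i=\abs{\Phi_{m,e}^{(i)}(\theta_1,\theta_2)}^{2}=N_{\mathbb Q(\zeta_e)/\mathbb Q}\bigl(\Phi_{m,e}^{(i)}(\theta_1,\theta_2)\bigr)$ is a positive rational integer, equal to the norm of the ideal $\bigl(\Phi_{m,e}^{(i)}(\theta_1,\theta_2)\bigr)$ (here $\abs{\cdot}^{2}$ is the field norm because $\mathbb Q(\zeta_e)$ is imaginary quadratic). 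Hence $M_i=(g^{d}M_i)/g^{d}$ is a positive rational which, being also an algebraic integer, is a positive rational integer, and $p\mid M_i$ forces some prime ideal of $\mathbb Z[\zeta_e]$ above $p$ to divide $\bigl(\Phi_{m,e}^{(i)}(\theta_1,\theta_2)\bigr)$.

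I would then establish three properties of $M_i$, all for $m$ exceeding an effectively computable constant. First, by Lemma \ref{lem11} (with \eqref{eq20}), $\log M_i=2\log\abs{\Phi_{m,e}^{(i)}(\lambda_1,\lambda_2)}>(\phi(me)/\phi(e))\log\abs{\lambda_1}$, while Lemma \ref{lem10} gives a matching upper bound $O(\phi(me)\log\abs{\lambda_1})$; so $M_i$ is large. Second, by \eqref{eq15} the residues coprime to $e$ are just $\pm1$, so $\Phi_{me}(\lambda_1,\lambda_2)=\Phi_{m,e}^{(1)}(\lambda_1,\lambda_2)\Phi_{m,e}^{(-1)}(\lambda_1,\lambda_2)$; this value lies in $\mathbb Z$, being fixed by every automorphism of $\mathbb Q(\zeta_e,\sqrt g)$ over $\mathbb Q$ since $\Phi_{me}$ is symmetric and homogeneous of even degree, and taking absolute values squared gives $\Phi_{me}(\lambda_1,\lambda_2)^{2}=M_1M_{-1}$, so $M_i$ divides $\Phi_{me}(\lambda_1,\lambda_2)^{2}$ in $\mathbb Z$; hence by Lemma \ref{lem1} every prime factor of $M_i$ is congruent to $\pm1\pmod{me}$, with the sole exception of $P(me/(3,me))$, which divides $M_i$ to at most the second power. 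Third, from the same divisibility and Lemma \ref{lem12}, for every rational prime $p$ beyond an effectively computable bound, $\ord_{p}M_i\le2\ord_{p}\Phi_{me}(\lambda_1,\lambda_2)<2p\exp(-\log p/51.9\log\log p)\log\abs{\lambda_1}\log me$.

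These three properties put $M_i$ in exactly the configuration analysed in the proof of Theorem 1.1 of \cite{41} for $\Phi_{n}(\alpha,\beta)$, with $me$ here in the role of $n$. Combining the lower bound for $\log M_i$, the Brun--Titchmarsh estimate for the number of primes $\equiv\pm1\pmod{me}$ not exceeding a given bound, and the multiplicity bound above, just as in \cite{41} one deduces, for $m$ exceeding an effectively computable number, the existence of a prime $p\mid M_i$ with $p>me\exp(\log me/103.95\log\log me)$; since $me\ge m$ and $t\mapsto(\log t)/\log\log t$ increases for large $t$, this gives $p>m\exp(\log m/103.95\log\log m)$. Finally, $p\mid M_i$ yields a prime ideal $\mathfrak p$ of the principal ideal domain $\mathbb Z[\zeta_e]$ above $p$ dividing $\bigl(\Phi_{m,e}^{(i)}(\theta_1,\theta_2)\bigr)$; writing $\mathfrak p=(\pi)$ with $\pi$ irreducible, \eqref{eq119} gives $\Phi_{m,e}^{(i)}(\theta_1,\theta_2)\mid\theta_1^{m}-\zeta_e^{i}\theta_2^{m}$, so $\pi\mid\theta_1^{m}-\zeta_e^{i}\theta_2^{m}$; and as $p$ is large it is unramified in $\mathbb Q(\zeta_e)$, so either $p$ is inert, when we may take $\pi=p$, or $p$ splits, when $\pi\overline\pi=p$.

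The main obstacle is the invocation of \cite{41}: one must check that the quantitative chain --- the lower bound for the cyclotomic value, Brun--Titchmarsh for two residue classes, and the constant $51.9$ in Lemma \ref{lem12} --- still yields the constant $103.95$ once $\Phi_{n}(\alpha,\beta)$ is replaced by $M_i$. The replacement costs only bounded multiplicative constants (a factor $2$ from $\abs{\cdot}^{2}$, and the factor $\phi(e)=2$ in Lemma \ref{lem11}), which do not affect the exponent $1/51.9$ from which $103.95$ arises, so the constant is preserved; carrying this out carefully is the delicate point. A minor technical matter, dealt with above by working with $M_i$ and with prime ideals of $\mathbb Z[\zeta_e]$, is that $\Phi_{m,e}^{(i)}(\lambda_1,\lambda_2)$ itself need not be a rational integer when $g$ is not a perfect square.
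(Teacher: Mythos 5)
Your proposal is correct and is essentially the paper's own argument: the same decomposition via $\Phi_{m,e}^{(1)}\Phi_{m,e}^{(-1)}=\Phi_{me}$, Lemma \ref{lem1} for the congruence condition on the prime factors, Lemma \ref{lem11} for the size lower bound, Lemma \ref{lem12} for the multiplicity bound, and the same size-versus-multiplicity comparison (with a trivial count of primes $\equiv\pm1\pmod{me}$ up to $P$) producing $103.95$ from $2\times 51.9$ --- the step you defer to \cite{41} is exactly what the paper carries out explicitly in \eqref{eq26}--\eqref{eq29}. The only difference is bookkeeping: you factor the rational integer $M_i=|\Phi_{m,e}^{(i)}(\lambda_1,\lambda_2)|^2$ over $\mathbb Z$ and recover the irreducible $\pi$ at the end through a prime ideal above $p$, whereas the paper factors $\Phi_{m,e}^{(i)}(\lambda_1,\lambda_2)$ directly into irreducibles of $\mathbb Z[\zeta_e]$; your variant has the incidental merit of not needing the assertion that $\Phi_{m,e}^{(i)}(\lambda_1,\lambda_2)$ itself lies in $\mathbb Z[\zeta_e]$.
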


\begin{proof}
Let $c_1,c_2,\dots$ denote positive numbers which are effectively computable in terms of $\theta_1$ and $\theta_2$. From Section 2 we see that $\Phi_{m,e}^{(i)}(x,y)$ is a polynomial with coefficients in $\mathbb Z[\zeta_e]$. Thus $\Phi_{m,e}^{(i)}(\theta_1,\theta_2)$ is in $\mathbb Z[\zeta_e]$ and, by \eqref{eq119}, $\Phi_{m,e}^{(i)}(\theta_1,\theta_2)$ divides $\theta_1^m-\zeta_e^i\theta_2^m$ in $\mathbb Z[\zeta_e]$. By \eqref{eq15}
$$
\Phi_{m,e}^{(1)}(\theta_1,\theta_2)\Phi_{m,e}^{(-1)}(\theta_1,\theta_2)=\Phi_{me}(\theta_1,\theta_2)
$$
and therefore
\begin{equation} \label{eq25}
\Phi_{m,e}^{(1)}(\lambda_1,\lambda_2)\Phi_{m,e}^{(-1)}(\lambda_1,\lambda_2)=\Phi_{me}(\lambda_1,\lambda_2).
\end{equation}
Notice that $\Phi_{m,e}^{(j)}(\lambda_1,\lambda_2)=g^{-\phi(me)/2\phi(e)}\Phi_{m,e}^{(j)}(\theta_1,\theta_2)$ for $j=\pm 1.$
Since $\Phi_{m,e}^{(j)}(\theta_1,\theta_2)$ is in $\mathbb Z[\zeta_e]$ and $\Phi_{m,e}^{(j)}(\lambda_1,\lambda_2)$ is an algebraic integer we see that $\Phi_{m,e}^{(j)}(\lambda_1,\lambda_2)$ is in $\mathbb Z[\zeta_e]$ for $j=\pm 1.$ Therefore if $\pi$ is an irreducible in  $\mathbb Z[\zeta_e]$ which divides $\Phi_{m,e}^{(i)}(\lambda_1,\lambda_2)$ then $\pi$ divides $\Phi_{m,e}^{(i)}(\theta_1,\theta_2)$ and so divides  $\theta_1^m-\zeta_e^i\theta_2^m$. We shall now show that $\Phi_{m,e}^{(i)}(\lambda_1,\lambda_2)$ is divisible by an irreducible $\pi$ which is either a large rational prime or is such that $\pi\overline\pi$ is a large rational prime.

Since $(\lambda_1 + \lambda_2)^2$ and $\lambda_1\lambda_2$ are coprime integers $\Phi_{me}(\lambda_1,\lambda_2)$ is an integer for $me>12$ and by Lemma \ref{lem1}  $P(me/(3,me))$ divides $\Phi_{me}(\lambda_1,\lambda_2)$ to at most the first power. All other prime factors are congruent to $\pm 1 (\bmod me)$. Thus
$$
\Phi_{m,e}^{(i)}(\lambda_1,\lambda_2)=\gamma\pi_1^{l_1}\dots \pi_t^{l_t}
$$
where $\gamma$ is a divisor of $P(me/(3,me))$, $t\geq 0$, $\pi_1,\dots , \pi_t$ are irreducibles of $\mathbb Z[\zeta_e]$  and $l_1,\dots,l_t$ are positive integers. Note that $t\geq 1$ for $m>c_1$ by Lemma \ref{lem11}. Let $P$ be the largest prime associated with an irreducible $\pi_j$. Then by \eqref{eq25} and  Lemma \ref{lem12}
$$
\max_j l_j \leq 2P\exp(-\log P/51.9\log\log P)\log |\lambda_1|\log me.
$$
hence
\begin{equation} \label{eq26}
\log |\Phi_{m,e}^{(i)}(\lambda_1,\lambda_2)| \leq \log me +2tP\log P\exp(-\log P/51.9\log\log P)\log |\lambda_1|\log me.
\end{equation}
But $t\leq 2(\pi(P,me,1)+\pi(P,me,-1))$ and so 
\begin{equation} \label{eq27}
t \leq 5P/me. 
\end{equation}
Thus by \eqref{eq26}  and \eqref{eq27}
\begin{equation} \label{eq28}
\log |\Phi_{m,e}^{(i)}(\lambda_1,\lambda_2)| \leq c_2(P^2\log P\exp(-\log P/51.9\log\log P)\log me)/me,
\end{equation}
and by Lemma \ref{lem11}, for $m>c_3$,
\begin{equation} \label{eq29}
\log |\Phi_{m,e}^{(i)}(\lambda_1,\lambda_2)| > (\phi(me)/2\phi(e))\log |\lambda_1|.
\end{equation}
Comparing \eqref{eq28} and \eqref{eq29} we find that, for $m> c_4$,
$$
me\phi(me)/\log m < c_5P^2 \log P\exp(-\log P/51.9\log\log P).
$$
Since $ \phi(me)> c_6m/\log\log m$
$$
P > m\exp(\log m/ 103.95\log \log m)
$$
for $m>c_7$ as required.

\end{proof}

\section{Proof of Theorem 1}

Put $\mathbb K =\mathbb Q(\alpha)$ and let $\mathcal O_\mathbb K$ denote the ring of algebraic integers of $\mathbb K$. Let $w$ be the smallest positive integer for which $wa$ and $wb$ are algebraic integers. By considering the sequence
$(v_n)^\infty_{n=0}$ with $v_n=wu_n$ for $n=0,1,\dots$ we see that it suffices to prove our result for sequences $(u_n)^\infty_{n=0}$ for which $a,b,\alpha$ and $\beta$ are algebraic integers. Since $a/b$ and $\alpha/\beta$ are multiplicatively dependent there exist integers $k$ and $l$, not both zero, for which
\begin{equation} \label{eq30}
(a/b)^k=(\alpha/\beta)^l.
\end{equation}
By inverting \eqref{eq30} if necessary we may suppose that $k\geq 0$. Notice that $k\neq 0$ since otherwise $\alpha/\beta$ is a root of unity contrary to the assumption that $(u_n)^\infty_{n=0}$ is non-degenerate. Thus $k>0$.

If $l=0$ then $a/b$ is a root of unity and we put
\begin{equation} \label{eq31}
u_n=a(\theta_1^n-\zeta\theta_2^n)
\end{equation}
where
$$
(\theta_1,\theta_2)=(\alpha,\beta)
$$
and $\zeta$ is a root of unity from $\mathbb K$.

We now suppose that $k>0$ and $l\neq0$ and, following Schinzel \cite{27} and Luca \cite{16}, we put
$$
l_1=l/(k,l), k_1=k/(k,l).
$$
It follows from \eqref{eq30} that
\begin{equation} \label{eq32}
(a/b)^{k_1}=(\alpha/\beta)^{l_1}\zeta
\end{equation}
where $\zeta$ is a root of unity from $\mathbb K$. There exists a unique pair of integers $(x,y)$ for which
\begin{equation} \label{eq33}
xl_1+yk_1=1
\end{equation}
and
$$
0<y\leq |l_1|.
$$
Put
$$
\rho=a^x\alpha^y/b^x\beta^y.
$$
Then, by \eqref{eq33},
\begin{equation} \label{eq34}
\rho^{l_1}=(a/b)^{xl_1}(\alpha/\beta)^{yl_1}=(a/b)^{xl_1}(a/b)^{yk_1}\zeta^{-y}=(a/b)\zeta^{-y}
\end{equation}
and
\begin{equation} \label{eq35}
\rho^{k_1}=(a/b)^{xk_1}(\alpha/\beta)^{yk_1}=(\alpha/\beta)^{xl_1}\zeta^x(\alpha/\beta)^{yk_1}=(\alpha/\beta)\zeta^x.
\end{equation}
Thus
$$
(a/b)(\alpha/\beta)^n=\rho^{l_1}\zeta^{y}\rho^{k_1n}\zeta^{-xn}=\rho^{l_1+k_1n}\zeta^{y-xn}.
$$
Accordingly
$$
u_n=b\beta^n((a/b)(\alpha/\beta)^n+1)
$$
so
$$
u_n=b\beta^n\zeta^{y-xn}(\rho^{l_1+k_1n}+\zeta^{xn-y}),
$$
and we find that
\begin{equation} \label{eq36}
\theta_2^{l_1+k_1n}u_n=b\beta^n\zeta^{y-xn}(\theta_1^{l_1+k_1n}-(-\zeta^{xn-y})\theta_2^{l_1+k_1n})
\end{equation}
where 
\begin{equation} \label{eq37}
(\theta_1,\theta_2)=(a^x\alpha^y,b^x\beta^y) 
\end{equation}
when $x\geq 0$ and
\begin{equation} \label{eq38}
(\theta_1,\theta_2)=(b^{-x}\alpha^y,a^{-x}\beta^y) .
\end{equation}
when $x<0$. Observe that
$$
\theta_1/\theta_2= \alpha/\beta
$$
in case \eqref{eq31} while
$$
\theta_1/\theta_2= \rho
$$
in cases \eqref{eq37} and \eqref{eq38}. Thus, by \eqref{eq35} and the fact that $\alpha/\beta$ is not a root of unity we see that in all three cases $\theta_1/\theta_2$ is not a root of unity. Furthermore either $a,b,\alpha,\beta$ are non-zero integers or $\alpha$ and $\beta$ are algebraic conjugates hence $\theta_1$ and $\theta_2$ are algebraic conjugates. In both cases $\theta_1+\theta_2$ and $\theta_1\theta_2$ are non-zero integers. Note that in the former case $\mathbb K=\mathbb Q$ and so the root of unity $\zeta$ in \eqref{eq31}, and also in \eqref{eq32}, is $1$ or $-1$.

If in \eqref{eq31} $\zeta$ is $1$ then $\Phi_n(\theta_1,\theta_2)$ divides $u_n$ while if $\zeta$ is $-1$ then $\Phi_{2n}(\theta_1,\theta_2)$ divides $u_n$ and in both cases the result follows from Lemma \ref{lem 2}. If $l\neq 0$ then \eqref{eq36} holds and $\theta_2^{l_1+k_1n}u_n$ is an algebraic integer in $\mathbb K$ which is divisible by $\Phi_{k_1n+l_1}(\theta_1,\theta_2)$ in $\mathcal O_\mathbb K$ if $-\zeta^{xn-y}$ is $1$ and is divisible by $\Phi_{2(k_1n+l_1)}(\theta_1,\theta_2)$ in $\mathcal O_\mathbb K$ if $-\zeta^{xn-y}$ is $-1$. Again the result follows from Lemma \ref{lem 2}.

It remains to consider the possibility that $\zeta$ in \eqref{eq31} or $-\zeta^{xn-y}$ in \eqref{eq36} is a root of unity in $\mathbb K$ different from $1$ or $-1$. Since the degree of $\mathbb K$ is at most $2$ over $\mathbb Q$ we find that the root of unity must be a primitive third, fourth or sixth root of unity and so $\mathbb K = \mathbb Q(\zeta_e)$ with $e$ equal to $3,4$ or $6$, hence equal to $3$ or $4$. Notice that $\mathbb Z [\zeta_e]$ is the ring of algebraic integers of $\mathbb Q(\zeta_e)$and that $\mathbb Z [\zeta_e]$ is a unique factorization domain. Since $\mathbb Q(\alpha)=\mathbb Q(\zeta_e)$ we see that $\alpha$ and $\beta$ and also $a$ and $b$ are complex conjugates hence
$$
\theta_1=\overline\theta_2,
$$
 in all three cases.

Let $c_1,c_2,\dots$ denote positive numbers which are effectively computable in terms of $a,b,\alpha$ and $\beta$. Note that if $\pi$ is an irreducible in $\mathbb Z [\zeta_e]$ which is not a rational prime then $\pi\overline\pi$ is a prime $p$ and since $u_n$ is an integer if $\pi$ divides $u_n$ then $p$ divides $u_n$.        If $\zeta$ in \eqref{eq31} is a root of unity different from $1$ or $-1$ then we may apply Lemma \ref{lem13} with $m$ equal to $n$ to give the result. If $-\zeta^{xn-y}$ in \eqref{eq36} is a root of unity different from $1$ or $-1$ then we may apply Lemma \ref{lem13} with $m=l_1+k_1n$. Since $l_1+k_1n> n/2$ for $n>c_1$ we see that
$$
\theta_1^{l_1+k_1n}-(-\zeta^{xn-y})\theta_2^{l_1+k_1n}
$$
is divisible by an irreducible $\pi$ in $\mathbb Z [\zeta_e]$ which is either a rational prime $p$ or is such that $\pi\overline\pi=p$ and in both cases
$$
p>n\exp(\log n/104\log \log n)
$$
for $n>c_2.$ By \eqref{eq36} $p$ divides $u_n$ since $b\beta^n\zeta^{y-xn}$ is an algebraic integer and for $n>c_3$ we see that neither $\pi$ nor $\overline\pi$ divides $\theta_2$. The result now follows.

\section{Proof of Theorem 2}

Let $u_n$ denote the $n$-th term of  a non-degenerate binary recurrence sequence as in \eqref{eq1} and let $g=(r^2,s)$. Let $\mathbb K = \mathbb Q(\alpha)$ and let $\mathcal O_\mathbb K$ denote the ring of algebraic integers of $\mathbb K$. For any $\theta$ in $\mathcal O_\mathbb K$ let $[\theta]$ denote the ideal in $\mathcal O_\mathbb K$ generated by $\theta$. Notice that
$$
(x-\alpha^2/g)(x-\beta^2/g)=x^2-((r^2+2s)/g)x+(s/g)^2.
$$
Since $(r^2+2s)/g$ and $s/g$ are coprime
$$
([\alpha^2/g],[\beta^2/g])=[1].
$$
Put
$$
v_n=g^{-n}u_{2n}=a(\alpha^2/g)^n+b(\beta^2/g)^n
$$
and
$$
w_n=g^{-n}u_{2n+1}=a\alpha(\alpha^2/g)^n+b\beta(\beta^2/g)^n
$$
for $n=0,1,2,\dots$ .

We shall prove that if $(u_n)^\infty_{n=0}$ is a non-degenerate binary recurrence sequence as in \eqref{eq1} with $([\alpha],[\beta])=[1]$ then for all positive integers $n$, except perhaps a set of asymptotic density $0$,
\begin{equation} \label{eq39}
P(u_n) \geq n\exp(\log n/103.95\log \log n).
\end{equation}
Since $(n/2)-1 \geq n/3$ for $n\geq 6$ and
$$
(n/3)\exp(\log (n/3)/103.95\log \log (n/3))> n\exp(\log n/104 \log \log n)
$$ 
for $n$ sufficiently large we see that this suffices to prove our result in general on considering the non-degenerate binary recurrence sequences $(v_n)^\infty_{n=0}$ and $(w_n)^\infty_{n=0}$ in place of $(u_n)^\infty_{n=0}$ .

Let $c_1,c_2,\dots$ denote positive numbers which are effectively computable in terms of $a,b,\alpha$ and $\beta$. By Theorem \ref{Theorem 1} it suffices to prove our result under the additional assumption that $a/b$ and $\alpha/\beta$ are multiplicatively independent. Further we may assume, without loss of generality, that $|\alpha| \geq |\beta|.$

To establish \eqref{eq39} we shall assume that there is a positive number $\delta$ such that
\begin{equation} \label{eq40}
P(u_m) < m\exp(\log m/103.95\log \log m),
\end{equation}
for a set of integers $m$ of positive upper density $\delta$ and we shall show that this leads to a contradiction.
Accordingly, we can find arbitrarily large integers $n$ such that between $n$ and $2n$ there are at least $\delta n/2$ integers $m$ which satisfy \eqref{eq40}. Fix such an integer $n$ and denote the set of these integers by $M$. Put
\begin{equation} \label{eq41}
T= 2n\exp(\log 2n/103.95\log \log 2n),
\end{equation}
and for each prime number $p$ less than $T$ let $u_{m(p)}$ be the term with $n\leq m(p)\leq 2n$ which is divisible by the highest power of $p$; if more than one term is divisible by $p$ raised to the largest exponent then denote the one with least index by $u_{m(p)}$.

It is proved on page 24 of \cite{39} that, for $n$ sufficiently large, at most $3$ of the integers $m$ with $n\leq m \leq 2n$ satisfy
$$
|u_m| < |\alpha|^{3m/4}.
$$
Further, since $u_m$ is non-zero for $m$ sufficiently large, we see that
\begin{equation} \label{eq42}
\log |\prod_{\substack{m\in M}}u_m| > \frac{\delta n^2}{4}\log |\alpha|
\end{equation}
for $n$ sufficiently large.

Put
$$
S(p)=\frac{u_n\dots u_{2n}}{u_{m(p)}}.
$$
Clearly
\begin{equation} \label{eq43}
 |\prod_{\substack{m\in M}}u_m| \leq \prod_{p<T}|u_{m(p)}|^{-1}_p|S(p)|^{-1}_p.
\end{equation}
By Lemma \ref{lem3}, for $p>c_1$
\begin{equation} \label{eq44}
\log |u_{m(p)}|^{-1}_p < p\log p\exp(-\log p/51.9\log \log p)\log 2n.
\end{equation}
Further, for $p\leq c_1$
\begin{equation} \label{eq45}
\log |u_{m(p)}|^{-1}_p < \max_{n\leq m\leq 2n} \log |u_m| < 4n\log |\alpha|
\end{equation}
for $n$ sufficiently large.
Thus
$$
\sum_{p<T}\log |u_{m(p)}|^{-1}_p \leq \sum_{p\leq c_1}\log |u_{m(p)}|^{-1}_p+ \sum_{c_1<p<T}\log |u_{m(p)}|^{-1}_p
$$
and by \eqref{eq44} and \eqref{eq45}
$$
\sum_{p<T}\log |u_{m(p)}|^{-1}_p \leq c_2n+\pi(T)T\log T \exp(-\log T/51.9\log \log T)\log 2n.
$$
Therefore, by \eqref{eq41}, for $n$ sufficiently large
\begin{equation} \label{eq46}
\sum_{p<T}\log |u_{m(p)}|^{-1}_p < n^2\exp(-\log n/ 40,000\log \log n).
\end{equation}

It remains to estimate $\prod_{p<T}|S(p)|^{-1}_p$.

 Let $p$ be a prime which divides $\alpha\beta$ and let $\mathfrak{p}$ be a prime ideal divisor of $[p]$ in $\mathcal O_\mathbb K$ with ramification index $e_{\mathfrak{p}}$. Then $\mathfrak{p}$ divides either $[\alpha]$ or $[\beta]$ and we shall assume, without loss of generality, that $\mathfrak{p}$ divides $[\alpha]$. Put $a'=(\beta-\alpha)a$ and $b'=(\beta-\alpha)b$. If $p^l$ exactly divides $[u_m]$ then $\mathfrak{p}^{e_{\mathfrak{p}}l}$ exactly divides $[b']$ for $m$ sufficiently large. Thus
 $$
 |u_m|_p \geq |a'b'|_p,
 $$
 and so
\begin{equation} \label{eq47}
\prod_{{\substack{p<T \\ p|\alpha\beta}}}|S(p)|^{-1}_p \leq \prod_{{\substack{p<T \\ p|\alpha\beta}}} |a'b'|^{-n}_p .
\end{equation}

Assume now that $p$ does not divide $\alpha\beta$ and let $t_n$, as in \eqref{eq10}, be the $n$-th term of the Lucas sequence associated with $(u_n)^\infty_{n=0}$. For positive integers $m$ and $r$ with $m\geq r$,
\begin{equation} \label{eq48}
u_m-\beta^ru_{m-r}=a'\alpha^{m-r}t_r.
\end{equation}
On setting $m=m(p)$ in \eqref{eq48} and letting $r$ run over those integers such that $m(p)-r\geq n$ we find that
\begin{equation} \label{eq49}
|u_{m(p)-1}\dots u_n|_p \geq \prod^{m(p)-n}_{r=1}(|t_r|_p|a'b'|_p).
\end{equation}
Let $l=l(p)$ be the smallest integer for which $p$ divides $t_l$; $l$ exists by Lemma \ref{lem4}. For any real number $x$ let $\lfloor x \rfloor$ denote the greatest integer less than or equal to $x$. By Lemma \ref{prop:2}, if $p>2$ then
\begin{equation} \label{eq50}
\prod^{m(p)-n}_{r=1}|t_r|_p = |t_l|^{s_1}_p|s_1!|_p,
\end{equation}
where $s_1=\lfloor \frac{m(p)-n}{l}\rfloor $, while if $p=2$
\begin{equation} \label{eq51}
\prod^{m(p)-n}_{r=1}|t_r|_2 = |t_l|^{s_1}_2|\frac{t_{2l}}{t_l}|^{s_2}_2|s_2!|_2,
\end{equation}
where $s_2=\lfloor \frac{m(p)-n}{2l}\rfloor $. Similarly on setting $m-r=m(p)$ in \eqref{eq48} and letting $r$ run over those integers such that $m(p)+r \leq 2n$ we find that for $p>2$
\begin{equation} \label{eq52}
|u_{m(p)+1}\dots u_{2n}|_p \geq |t_l|^{s_3}_{p}|s_3!|_p|a'b'|^{2n-m(p)}_p ,
\end{equation}
while for $p=2$,
\begin{equation} \label{eq53}
|u_{m(p)+1}\dots u_{2n}|_2 \geq |t_l|^{s_3}_{2}|\frac{t_{2l}}{t_l}|^{s_4}_2|s_4!|_2|a'b'|^{2n-m(p)}_2 ,
\end{equation}
where $s_3=\lfloor \frac{2n-m(p)}{l}\rfloor $ and $s_4=\lfloor \frac{2n-m(p)}{2l}\rfloor $. Thus, from \eqref{eq49}, \eqref{eq50} and \eqref{eq52} we see that if $p$ is a prime number which does not divide $2\alpha\beta$ then
\begin{equation} \label{eq54}
|S(p)|^{-1}_p \leq |t_l|^{-s}_p|s!|^{-1}_p|a'b'|^{-n}_p
\end{equation}
and
\begin{equation} \label{eq55}
|S(2)|^{-1}_2 \leq |t_l|^{-s}_2|\frac{t_{2l}}{t_l}|^{-s}_2|s!|^{-1}_2|a'b'|^{-n}_2
\end{equation}
where $s=\lfloor \frac{n}{l}\rfloor $.

By Lemma \ref{lem4} either $2$ divides $\alpha\beta$ or $2$ divides $t_n$ for some integer $n$ and $l(2)$ is either $2$ or $3$. But in the latter case, since $|t_l|\leq 2|\alpha|^l$,
\begin{equation} \label{eq56}
 |t_l|^{-s}_2|\frac{t_{2l}}{t_l}|^{-s}_2 \leq 2^n|\alpha|^{2n}.
\end{equation}
Therefore, by \eqref{eq47}, \eqref{eq54}, \eqref{eq55} and \eqref{eq56}
\begin{equation} \label{eq57}
\prod_{{\substack{p<T }}}|S(p)|^{-1}_p \leq  2^n|\alpha|^{2n}(\prod_{{\substack{p<T \\ p\nmid 2\alpha\beta}}}|t_l|^{-s}_p)n!|a'b'|^n.
\end{equation}

Now
\begin{equation} \label{eq58}
\prod_{{\substack{p<T \\ p\nmid 2\alpha\beta}}}|t_l|^{-s}_p = AB
\end{equation}
where
$$
A=\prod_{{\substack{l(p)< n/\log n \\ p<T \\ p\nmid 2\alpha\beta}}} |t_{l(p)}|^{-\lfloor \frac{n}{l(p)}\rfloor }_p
$$
and
$$
B=\prod_{{\substack{n/\log n \leq l(p) \\ p<T \\ p\nmid 2\alpha\beta}}} |t_{l(p)}|^{-\lfloor \frac{n}{l(p)}\rfloor }_p
$$
Observe that 
$$
A \leq \prod_{1\leq l < n/\log n}|t_l|^{\frac{n}{l}}
$$
and so
$$
A\leq  \prod_{1\leq l < n/\log n}2^n|\alpha|^{n}
$$
hence
\begin{equation} \label{eq59}
\log A \leq c_3n^2/\log n.
\end{equation}
Further when $l(p) \geq n/\log n$ we have
$$
\lfloor \frac{n}{l(p)}\rfloor  \leq \log n
$$
and, by Lemma \ref{lem4}, when $p<T$ we see that $l(p) < T+1$. Since 
$$
p+1 \geq l(p) \geq n/\log n
$$
it follows from Lemma \ref{lem8} that
$$
\log B \leq \pi(T)\log n(T+1)\exp(-\log (T+1)/51.9\log \log (T+1))\log (T+1)\log |\alpha|\log 2n
$$
hence, by \eqref{eq41},
\begin{equation} \label{eq60}
\log B \leq n^2\exp(-\log n/40,000\log \log n),
\end{equation}
for $n$ sufficiently large. By \eqref{eq57}, \eqref{eq58}, \eqref{eq59} and \eqref{eq60}
\begin{equation} \label{eq61}
\log \prod_{p<T}|S(p)|^{-1}_p \leq c_4 n\log n + c_3n^2/\log n + n^2\exp(-\log n/40,000\log \log n),
\end{equation}
for $n$ sufficiently large.

But the lower bound \eqref{eq42} for $\log |\prod_{\substack{m\in M}}u_m| $ is incompatible with the upper bound which follows from \eqref{eq43},  \eqref{eq46} and  \eqref{eq61}
for $n$ sufficiently large. This contradiction establishes our result.

\section{Acknowledgements}

This research was supported in part by the Canada Research Chairs Program and by grant A3528 from the Natural Sciences and Engineering Research Council of Canada.

\end{document}